\def\mfld#1{\texttt{\textquotesingle#1\textquotesingle}}
\theoremstyle{plain}
\newtheorem{theorem}{Theorem}
\newtheorem{lemma}[theorem]{Lemma}
\newtheorem{prob}[theorem]{Problem}
\newtheorem{question}[theorem]{Question}
\theoremstyle{definition}
\newtheorem{example}[theorem]{Example}
\newcommand{\calA}{\ensuremath{{\mathcal A}}}
\newcommand{\calT}{\ensuremath{{\mathcal T}}}
\newcommand{\calD}{\ensuremath{{\mathcal D}}}
\newcommand{\comment}[1]{}
\newcommand{\Z}{\ensuremath{\mathbb{Z}}}
\newcommand{\mobius}{M\"{o}bius }
\title{L-space knots with tunnel number >1 by experiment}
\author{Chris Anderson}
\author{Kenneth L.\ Baker}
\address{Department of Mathematics\\University of Miami\\ Coral Gables, FL 33146 \\ USA}
\email{canders@math.miami.edu, k.baker@math.miami.edu}
\author{Xinghua Gao}
\address{Department of Mathematics \\University of Illinois at Urbana-Champaign \\Urbana, IL 61801 \\ USA}
\email{xgao29@illinois.edu}
\author{Marc Kegel}
\address{Institut f\"ur Mathematik, Humboldt-Universit\"at zu Berlin, Unter den Linden 6, 10099 Berlin, Germany}
\email{kegemarc@math.hu-berlin.de}
\author{Khanh Le}
\address{Department of Mathematics\\Temple University\\ Philadelphia, PA 19122 \\ USA}
\email{khanh.q.le@temple.edu}
\author{Kyle Miller}
\address{Department of Mathematics\\ University of California, Berkeley\\ CA 94720-3840\\ USA}
\email{kmill@math.berkeley.edu}
\author{Sinem Onaran}
\address{Department of Mathematics\\Hacettepe University \\ Beytepe-Ankara, 06800\\ TURKEY}
\email{sonaran@hacettepe.edu.tr}
\author{Geoffrey Sangston}
\address{Department of Mathematics\\University of Maryland\\ College Park, MD 20742 \\ USA}
\email{gsangsto@umd.edu}
\author{Samuel Tripp}
\address{Department of Mathematics \\ Dartmouth College \\ Hanover, NH 03755\\USA}
\email{samuel.w.tripp.gr@dartmouth.edu}
\author{Adam Wood}
\address{Department of Mathematics and Statistics\\University of Melbourne\\ Parkville, VIC 3010 \\ Australia}
\email{awood3@student.unimelb.edu.au}
\author{Ana Wright}
\address{Department of Mathematics\\University of Nebraska-Lincoln\\ Lincoln, NE 68588 \\ USA}
\email{awright@huskers.unl.edu}
\begin{document}

\begin{abstract}
	In Dunfield's catalog of the hyperbolic manifolds in the SnapPy census which are complements of L-space knots in $S^3$, we determine that $22$ have tunnel number $2$ while the remaining all have tunnel number $1$.  Notably, these $22$ manifolds contain $9$ asymmetric L-space knot complements.  Furthermore, using SnapPy and KLO we find presentations of these $22$ knots as closures of positive braids that realize the Morton-Franks-Williams bound on braid index. The smallest of these has genus 12 and braid index 4.

\end{abstract}

\keywords{braid, L-space knot, asymmetric, SnapPy}

\subjclass[2010]{Primary 57M25, 57M27; Secondary 57R58}

\maketitle

\section{Introduction and Results}
A knot in $S^3$ with a positive Dehn surgery to a Heegaard Floer L-space \cite{Ozsvath2005} is an {\em L-space knot}.  Such knots are necessarily fibered \cite{Ni2007, Ghiggini2008} and strongly quasi-positive \cite{Hedden2010}.  Many L-space knots are also {\em braid positive},  in that they may be expressed as closures of positive braids.  For example, the torus knots that are L-space knots are the positive torus knots, and hence they are braid positive. However, there is an L-space knot known to not be braid positive.

\begin{example}
The positive trefoil $T_{2,3}$ is the $(2,3)$ torus knot. It is the only L-space knot of genus $1$ \cite{Ghiggini2008}.
Its $(2,3)$--cable $T_{2,3}^{2,3}$ is also an L-space knot by the cabling formula of \cite{Hedden2005}.  Using the cabling formula for knot genera, one finds that $T_{2,3}^{2,3}$ has genus $3$.  
As pointed out in \cite[Table 8]{Dunfield2019}, the knot $T_{2,3}^{2,3}$ is listed as the 15 crossing knot $15n124802$ in the nomenclature of \cite{Hoste1998}.

Now, for a closed positive braid diagram, Seifert's Algorithm produces a minimal genus Seifert surface for the closed braid.
Therefore if a genus $g$ knot with crossing number $c$ is the closure of a positive braid with braid index $b$ and word length $\ell\geq c$, then $2g-1 =\ell-b$.  Furthermore, we may assume that each braid generator appears in the braid word at least twice so that $\ell \geq 2(b-1)$, since otherwise there would be a smaller index positive braid whose closure is the knot.   Putting these together, one concludes that $4g \geq c$ for a knot that is the closure of a positive braid. 
Since the genus $3$ knot $T_{2,3}^{2,3}$ has crossing number greater than $12$, it cannot be braid positive.  
\end{example}   

Presumably, there are other satellite L-space knots which are not braid positive.  Nonetheless, one wonders the following.

\begin{question}[e.g.\ Problem 31.2 \cite{Hom2017}]
\label{ques:briadpos}
Are all hyperbolic L-space knots braid positive?
\end{question}

An affirmative answer would imply that there are finitely many hyperbolic L-space knots of any given genus, see \cite[Conjecture 6.7]{Hedden2018} and \cite[Conjecture 1.2]{Baker2015}.
Given the existence of L-space knots which are not braid positive, it seems an affirmative answer to this question is unlikely.  In attempting to find a counterexample, one may look towards hyperbolic L-space knots with other remarkable properties or remarkable origins.

Dunfield hands us such a collection of knots.\footnote{Personal communication.}   The data from \cite{Dunfield2018} determines that there are exactly 1,267 complements of knots in $S^3$ in the SnapPy census of $1$--cusped hyperbolic manifolds that can be triangulated with at most $9$ ideal tetrahedra.  Among these, the data from \cite{Dunfield2019} identifies $630$ as L-space knots, identifies $635$ as not L-space knots, and leaves $2$ of them unclassified.\footnote{These two unclassified knots have now been shown to be actually L-space knots \cite{BKM}.} Let $\calD$ be the set of these $630$ L-space knots and $2$ unclassified knots.

A hyperbolic knot is {\em asymmetric} if the isometry group of its complement is the trivial group.
Among these $632$ knots of $\calD$,
via computations of their isometry groups,
the set 
\begin{align*}
	\calA = \{& \mfld{t12533}, \mfld{t12681}, \mfld{o9\_38928}, \mfld{o9\_39162}, \mfld{o9\_40363},\\
	& \phantom{xxxxxxxxxxxxxxxxx} \mfld{o9\_40487}, \mfld{o9\_40504}, \mfld{o9\_40582}, \mfld{o9\_42675} \}
\end{align*}
collects the $9$ asymmetric ones.  All of the rest of the knots in $\calD$ have an order $2$ symmetry group generated by a strong involution as we show in Lemma~\ref{lem:symmetry}.

For the original purpose of this note, we demonstrate that none of these $9$ knots in $\calA$ provide a negative answer to Question~\ref{ques:briadpos}.
\begin{theorem}\label{thm:asymbraidpos}
The manifolds in $\calA$ are complements of braid positive knots. \\
Braidwords for these knots are listed in Table~\ref{tab:braidwords}.
\end{theorem}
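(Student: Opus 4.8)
The plan is to make the theorem constructive: for each of the nine manifolds $M \in \calA$ I would exhibit an explicit positive braid word $\beta$, recorded in Table~\ref{tab:braidwords}, whose closure $\widehat{\beta}$ is a knot whose complement is homeomorphic to $M$. Since $\widehat{\beta}$ is by construction the closure of a positive braid, this immediately realizes $M$ as the complement of a braid positive knot. The content of the argument is therefore twofold: first, \emph{finding} the candidate words (the experimental step promised by the title), and second, \emph{certifying} that each candidate really does reproduce the given census manifold.

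For the search I would first extract, for each $M$, the two numerical constraints that any braid positive presentation must satisfy. The Seifert genus $g$ is half the span of the (monic) Alexander polynomial, which SnapPy computes from $M$ and which equals the fiber genus since these knots are fibered; a lower bound $b$ for the braid index comes from the Morton-Franks-Williams inequality applied to the HOMFLY polynomial. As recalled in the discussion preceding the theorem, a genus-$g$ knot presented as a positive braid on $b$ strands has word length exactly $\ell = 2g + b - 1$. These identities pin down the word length and the number of strands, so the search runs over positive words of that fixed length in $\sigma_1,\dots,\sigma_{b-1}$ whose underlying permutation is a $b$-cycle (so that the closure is a single knot), working modulo conjugacy and the obvious symmetries of the braid monoid. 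Candidates are then pruned by comparing cheap invariants — Alexander and Jones polynomials, signature, and hyperbolic volume — against those of $M$, a task well suited to the combination of SnapPy and KLO.

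For the certification step I would, for each surviving candidate $\beta$, build $S^3 \setminus \widehat{\beta}$ as a one-cusped triangulated manifold directly from the braid word in SnapPy, and then compare it with $M$ using the canonical cell decomposition, i.e. the verified isometry signature. Agreement of verified isometry signatures certifies, via Mostow--Prasad rigidity, that $S^3 \setminus \widehat{\beta}$ and $M$ are isometric and hence homeomorphic; the knot complement theorem of Gordon and Luecke then identifies $\widehat{\beta}$ with the knot whose complement is $M$, completing the proof for that manifold. Chirality is not an issue for the statement as phrased, since we only claim $M$ is the complement of \emph{some} braid positive knot, and the braid positive $\widehat{\beta}$ witnesses this regardless of orientation conventions.

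The main obstacle is the search, not the verification: the space of positive words of length $\ell$ on $b$ strands grows quickly, and although the genus and braid-index constraints together with the invariant-based pruning cut it down dramatically, there is no a priori guarantee that a positive braid realizing the Morton-Franks-Williams bound exists, so the experiment could in principle fail to terminate with a match. The decisive part is the verified isometry signature, which converts an experimentally discovered word into a rigorous identification; the delicate point is that these signatures must be computed in verified interval arithmetic, so that the concluding homeomorphism is a theorem rather than a numerical coincidence.
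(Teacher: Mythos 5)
Your certification step is essentially the paper's proof: the paper likewise treats the theorem as the assertion that the explicit words in Table~\ref{tab:braidwords} work, and verifies each one by building the complement of the braid closure in SnapPy and identifying it with the census manifold (via \verb|snappy.Link(braid_closure=WORD).exterior().identify()|); your insistence on verified isometry signatures is, if anything, a more careful version of the same check, and you are right that Gordon--Luecke is not actually needed for the statement as phrased. Where you genuinely diverge is in how the words are found, and here your plan would not have produced the table. The paper does not enumerate positive words: even with the length pinned to $\ell=2g+b-1$ and the strand number guessed from the MFW bound, the search space for, say, \texttt{o9\_40363} (length $72$ on $7$ strands) or \texttt{o9\_28751} (length $109$ on $8$ strands) is astronomically large, and invariant-based pruning does not tame it. Instead the authors drill one or two short geodesics until SnapPy recognizes the result as a tabulated link complement, read off a surgery description of the knot from the cusp action of the isometry, reduce that description to a knot diagram by Rolfsen twists in KLO, and then let SnapPy's diagram simplification extract a braid word, which in every case happened to come out positive after further simplification. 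This is explicitly non-algorithmic and requires human intervention, but it converges; your exhaustive search, as you yourself note, has no termination guarantee and in practice would not terminate for most of these knots. So: correct proof of the theorem as stated, but the experimental heart of the paper is a different (and more feasible) discovery pipeline than the one you propose.
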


There is also a similar question about the tunnel numbers of  L-space knots.  The {\em tunnel number} of a knot (or link) is the minimum number of properly embedded arcs that need to be drilled out from its exterior to form a handlebody.   While L-space knots with arbitrarily large tunnel number may be obtained through iterated cables \cite[Proposition 23]{Baker2014},  all other known examples of hyperbolic L-space knots have tunnel number at most $2$.  In particular, see \cite[Theorem 1.13]{Motegi2016} and \cite[Proposition 10.2]{Baker2017}.
\begin{question}\label{ques:tn}
Do all hyperbolic L-space knots have tunnel number at most $2$?
\end{question}
  
Since knots with tunnel number $1$ necessarily admit an involution,  asymmetric hyperbolic knots must have tunnel number at least $2$.  So it is natural to consider the tunnel number for the asymmetric manifolds in $\calA$.  Indeed, this prompts the further exploration of whether any knot of $\calD$ provides a negative answer to Question~\ref{ques:tn}.  It turns out that none do.
\begin{theorem}\label{thm:tunnelnumber}
The knots in $\calD$ all have tunnel number at most $2$.
More specifically, there is a subset $\calT_2 \subset \calD$ of $22$ knots which contains $\calA$ so that 
\begin{itemize}
    \item the knots of $\calD-\calT_2$ have tunnel number $1$ and
    \item the knots of $\calT_2$ have tunnel number $2$.
\end{itemize}
\end{theorem}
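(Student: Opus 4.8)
The plan is to determine each tunnel number by trapping it between an explicit upper bound, obtained by exhibiting tunnel systems, and a lower bound, obtained by an obstruction. Throughout I would use the standard dictionary between tunnels and Heegaard splittings: a knot $K$ has tunnel number at most $n$ exactly when its exterior $E(K)$ admits a Heegaard splitting of genus $n+1$, and in that case $\pi_1(E(K))$ is generated by $n+1$ elements, so that
\begin{equation*}
\mathrm{rank}\,\pi_1(E(K)) \le t(K)+1.
\end{equation*}
Since every knot in $\calD$ is hyperbolic it is nontrivial, hence has tunnel number at least $1$; it therefore suffices, for each knot, to decide whether its tunnel number equals $1$ or is strictly larger, and to cap the larger values at $2$.

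For the upper bounds I would work knot by knot in SnapPy and KLO. Starting from the census triangulation (or, for the knots of $\calA$, from the positive braid words of Theorem~\ref{thm:asymbraidpos}), I would search for one or two drilling arcs whose removal leaves a handlebody, simplifying the resulting triangulation until it is recognized as a handlebody, a property that a package such as Regina can certify. Exhibiting a single such arc proves $t(K)\le 1$, hence $t(K)=1$; when no single tunnel is found but a pair of arcs works, this proves $t(K)\le 2$. This search is expected to furnish an explicit unknotting tunnel for every knot outside a set of $22$, and a pair of tunnels (a genus--$3$ splitting) for those $22$.

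The lower bound $t(K)\ge 2$ for the $22$ knots of $\calT_2$ splits into two cases. For the $9$ asymmetric knots of $\calA$ it is immediate: a tunnel number one knot admits an involution, giving its complement a nontrivial symmetry and contradicting asymmetry, so $t\ge 2$. For the remaining $13$ knots of $\calT_2\setminus\calA$ this argument is unavailable, and I would instead certify $\mathrm{rank}\,\pi_1(E(K))\ge 3$, which forces $t(K)\ge 2$ by the displayed inequality. The certificate is a homomorphism count: a $2$--generated group $\Gamma$ satisfies $|\mathrm{Hom}(\Gamma,G)|\le |G|^2$ for every finite group $G$, so producing a $G$ with $|\mathrm{Hom}(\pi_1(E(K)),G)|>|G|^2$ exhibits a group that is not $2$--generated. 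Concretely I would realize these counts by enumerating the low-degree covers of $E(K)$ in SnapPy (equivalently, homomorphisms to symmetric groups) and comparing their number against the bound forced by rank~$2$.

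The main obstacle is exactly this lower bound for the symmetric knots of $\calT_2$. Because Heegaard genus can strictly exceed rank, it is \emph{a priori} possible that some of these $13$ knots have $2$--generated fundamental group while still having tunnel number $2$; for such a knot the subgroup-counting certificate fails, and one must instead rule out a genus--$2$ Heegaard splitting of $E(K)$ directly---for instance by a normal-surface enumeration of Heegaard surfaces, a considerably more expensive computation. Once every knot carries matching upper and lower bounds, assembling the pieces yields $t=1$ on $\calD-\calT_2$ and $t=2$ on $\calT_2$, and in particular $t\le 2$ throughout $\calD$.
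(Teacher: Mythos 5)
Your upper-bound strategy and your treatment of the nine asymmetric knots match the paper in substance: the paper also certifies $t\le 1$ or $t\le 2$ by computer (using Berge's Heegaard program to check that a $2$- or $3$-generator presentation of $\pi_1$ is actually realized by a Heegaard splitting, together with the observation that the tunnel number of a knot is at most that of any link containing it, applied to drillings of dual curves), and it also gets $t\ge 2$ for the knots of $\calA$ from the fact that a tunnel number one knot is strongly invertible. The genuine gap is your lower bound for the $13$ symmetric knots of $\calT_2-\calA$. Your primary certificate is $\mathrm{rank}\,\pi_1(E(K))\ge 3$ via homomorphism counting, but nothing guarantees that the rank of these groups is not $2$ --- you acknowledge this yourself --- and your fallback (deciding by a normal-surface enumeration whether the exterior admits a genus $2$ splitting) is not an implemented, practically feasible computation. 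As written, the proposal describes a test that may simply return no information for some or all of these $13$ knots, so the theorem is not actually proved for them.

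The paper's key idea, absent from your proposal, is to pass to a Dehn filling: Heegaard genus does not increase under Dehn filling, and each knot of $\calT_2-\calA$ admits an exceptional filling to a graph manifold (read off from Dunfield's surgery data). Kobayashi's classification of closed orientable genus-$2$ manifolds containing an essential torus then gives a concretely checkable criterion --- essentially that the regular fibers of the two Seifert fibered pieces must intersect once in the common torus, together with restrictions on what the pieces can be --- and one verifies from the Regina descriptions of these fillings that none of them satisfies it, so each filled manifold has Heegaard genus at least $3$ and hence each knot has tunnel number at least $2$. (One filling, of \mfld{o9\_29751}, requires a separate argument via double branched covers and bridge numbers of the quotient links.) To salvage your route you would need either to actually exhibit finite quotients witnessing rank at least $3$ for all $13$ knots, or to replace that step with an obstruction of this Dehn-filling type.
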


These symmetric tunnel number $2$ knots are 
\begin{align*}
	\calT_2- \calA = \{
	& \mfld{t09284}, \mfld{t09450}, \mfld{t09633}, \mfld{t10496}, \mfld{o9\_28751}, \mfld{o9\_29751}, \mfld{o9\_32314},\\ 
	& \phantom{xxxxxxxx} \mfld{o9\_33380}, \mfld{o9\_33944}, \mfld{o9\_33959},
	  \mfld{o9\_34409}, \mfld{o9\_36380}, \mfld{o9\_40026}\},
\end{align*}
and we show in Lemma~\ref{lem:symmetry} that their only symmetry is a strong inversion.

We also extend Theorem~\ref{thm:asymbraidpos} to all knots of $\calT_2$.
\begin{theorem}\label{thm:TN2braidpos}
The manifolds in $\calT_2$ are complements of braid positive knots.\\
Braidwords for the knots of $\calT_2-\calA$ are listed in Tables~ \ref{tab:braidwordsT2} and \ref{tab:braidwordsT2-2}.
\end{theorem}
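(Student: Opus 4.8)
The plan is to reduce the claim to a finite verification, one manifold at a time. The asymmetric members of $\calT_2$, namely those in $\calA$, are already handled by Theorem~\ref{thm:asymbraidpos}, so it suffices to produce and certify a positive braid word for each of the $13$ remaining symmetric tunnel number $2$ knots in $\calT_2-\calA$. For each such census manifold $M$ I would exhibit an explicit braid $w$, display it in Table~\ref{tab:braidwordsT2} or~\ref{tab:braidwordsT2-2}, and argue two things: that $w$ is positive, and that the complement of its closure $\widehat{w}$ is isometric to $M$.

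First I would pass from the abstract triangulation of the $1$--cusped census manifold to a concrete knot diagram. Using SnapPy to recover the meridian and to present $M$ via Dehn filling, and then simplifying in KLO, I would obtain a planar diagram of the underlying knot. Because every knot in $\calD$ is an L-space knot, it is fibered and strongly quasi-positive, which is consistent with---and suggests a route toward---a positive braid presentation.

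The search for the positive braid is the step I expect to be the main obstacle: there is no algorithm guaranteed to convert a given diagram into a positive braid, so this must be done heuristically. Working in KLO, I would apply Markov and Reidemeister moves to transform the diagram into braid form and then try to cancel negative crossings, searching for a word in which every letter is a positive generator. To guide and terminate the search I would compute the Morton--Franks--Williams bound from the HOMFLY polynomial and aim for a positive braid whose index realizes this bound; matching the bound simultaneously confirms that the braid index is minimal and gives strong evidence that the candidate word represents the correct knot.

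Finally I would make the identification rigorous. For each candidate $w$ I would build the complement of $\widehat{w}$ in SnapPy and certify, via verified hyperbolicity and a verified canonical retriangulation (interval arithmetic together with \texttt{is\_isometric\_to}), that this complement is isometric to the census manifold $M$. By Gordon--Luecke, isometric knot complements belong to the same knot, so this certifies that the census knot is exactly $\widehat{w}$. Positivity is then immediate from inspection of the tabulated words, completing the proof.
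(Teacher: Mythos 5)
Your proposal is correct and follows essentially the same route as the paper: exhibit an explicit positive braid word for each manifold, observe positivity by inspection, and certify the identification by building the complement of the braid closure in SnapPy and matching it to the census manifold (the paper uses \verb|snappy.Link(braid_closure=WORD).exterior().identify()|, with the discovery pipeline of drilling geodesics, passing to a surgery description, and reducing in KLO via Rolfsen twists described separately). The only cosmetic differences are that the paper treats the MFW bound as a separate theorem rather than part of this proof, and the Gordon--Luecke appeal is unnecessary since the statement only concerns the complements.
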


It remains to show that all the knots of $\calD-\calT_2$ are braid positive.  As our methods are computer assisted rather than purely algorithmic, we have not yet embarked upon that task.
\begin{prob}
Show that all the knots in $\calD$ are braid positive.
\end{prob}

The Morton-Franks-Williams inequality gives a lower bound on the braid index of a link in terms of its HOMFLY-PT polynomial \cite{MortonBraid, FranksWilliamsBraid}.  For the $P(v,z)$ version of the HOMFLY-PT polynomial of a link $L$, let $d_+$ and $d_-$ be the max and min degrees of $v$.  Then the Morton-Franks-Williams inequality states that 
\[(d_+-d_-)/2 + 1 \leq b\]
where $b$ is the braid index of $L$.  We call the number $(d_+-d_-)/2 + 1$ the {\em MFW bound}.

\begin{theorem}\label{thm:MFWbounds}
Every manifold in $\calT_2$ has a positive braid representative that realizes the MFW bound.\\
The braidwords listed in Tables~\ref{tab:braidwords}, \ref{tab:braidwordsT2}, and \ref{tab:braidwordsT2-2} all realize the MFW bound.
\end{theorem}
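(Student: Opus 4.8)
The plan is to verify the statement directly for each of the $22$ braidwords recorded in Tables~\ref{tab:braidwords}, \ref{tab:braidwordsT2}, and \ref{tab:braidwordsT2-2}. By Theorems~\ref{thm:asymbraidpos} and \ref{thm:TN2braidpos} we already know that each tabulated word is a positive braid whose closure is the corresponding knot of $\calT_2$, so the existence of a positive braid representative is in hand; what remains is purely a statement about the degrees of the HOMFLY-PT polynomial. Let $n$ denote the number of strands of a given tabulated braid. Since the closure of an $n$-strand braid is presented with $n$ Seifert circles, that braid exhibits braid index $b \le n$, and the Morton-Franks-Williams inequality then gives
\[ (d_+ - d_-)/2 + 1 \le b \le n. \]
Consequently the braid realizes the MFW bound exactly when $(d_+ - d_-)/2 + 1 = n$, equivalently when the $v$-span $d_+ - d_-$ of the HOMFLY-PT polynomial attains its maximal possible value $2(n-1)$.

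The verification thus reduces to computing the extreme $v$-degrees $d_+$ and $d_-$ of $P(v,z)$ for each of the $22$ knots. For each tabulated braidword I would compute the HOMFLY-PT polynomial from the braid itself rather than from a generic diagram, using a computer-algebra implementation of the skein relation (for example the Morton--Short Hecke-algebra algorithm, or SnapPy together with its link-polynomial routines), then read off $d_+$ and $d_-$ and compare $(d_+ - d_-)/2 + 1$ against the strand number $n$. The upper inequality $\mathrm{MFW} \le n$ is automatic for any braid representative, so the content of the argument is entirely the reverse equality: that the $v$-span is as large as Morton's bound permits, which simultaneously certifies that $n$ is the true braid index and that no destabilization can reduce the strand count.

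The main obstacle is computational rather than conceptual. The knots of $\calT_2$ have large genus (the smallest has genus $12$) and correspondingly long positive braid words, and a naive skein-relation expansion has cost exponential in the word length. The crux is therefore carrying these HOMFLY-PT computations to completion within feasible time and memory; once the polynomials are obtained, extracting $d_\pm$ and checking the single arithmetic identity $(d_+ - d_-)/2 + 1 = n$ for each of the $22$ knots is immediate. I would control the cost by working from the minimal-length positive braidwords in the tables and by using a representation-theoretic method (the Hecke-algebra approach) that exploits the braid structure directly, after which the individual checks assemble into the asserted result.
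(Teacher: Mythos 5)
Your proposal is correct and follows essentially the same route as the paper: the authors compute the HOMFLY-PT polynomial of each tabulated positive braid closure by computer (Sage's \texttt{homfly\_polynomial} in the $P(a,z)$ normalization with $a=v^{-1}$), read off the extreme $v$-degrees, and check that $(d_+-d_-)/2+1$ equals the strand number of the tabulated word, with the existence claim then following from Theorems~\ref{thm:asymbraidpos} and \ref{thm:TN2braidpos} exactly as you say. The only additional wrinkle in the paper is procedural rather than logical: when a candidate word failed the check, they further simplified the diagram in SnapPy to obtain a lower-index positive braid word, and the tables record the final words that pass.
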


\bigskip
We conclude the introduction with a few questions.

\smallskip

Observe that the closure of the positive braid shown in Figure~\ref{fig:braid} is an asymmetric hyperbolic L-space knot of genus 12 with braid index 4 and tunnel number 2. This knot (the manifold \mfld{t12533}) is the only knot in $\calA$ with braid index $4$.  In $\calT_2-\calA$, three more knots (the manifolds \mfld{t09284}, \mfld{t10496}, \mfld{o9\_34409}) also have braid index $4$.  All others in $\calT_2$ have larger braid indices.

Knots with braid index $2$ are torus knots, and the positive ones are L-space knots. The L-space knots with braid index $3$ are either positive torus knots or  braid positive twisted torus knots \cite{Lee}. In addition to being braid positive, such knots are all strongly invertible and have tunnel number 1 (e.g.\ via \cite{Dean2003}).  Hence asymmetric L-space knots and, more generally, L-space knots with tunnel number greater than $1$ must have braid index at least $4$.

\begin{question}
Are there other asymmetric L-space knots of braid index 4?
\end{question}

\begin{question}
What are the L-space knots of braid index 4 with tunnel number greater than $1$?
\end{question}

The smallest genus of asymmetric L-space knot in $\calA$ is $12$ which is also realized by only the knot \mfld{t12533}.

\begin{question}
What is the smallest genus among asymmetric L-space knots?
\end{question}

The previously known examples of asymmetric L-space knots all admit an {\em alternating surgery}, a non-trivial surgery to the double branched cover of a non-split alternating link \cite{Baker2017}.

\begin{question}
Do any of the asymmetric L-space knots in $\calA$ admit an alternating surgery? \footnote{Recent work shows that none of the knots in $\calA$ admit an alternating surgery \cite{BKM}.}
\end{question}

\section{Methods}
For most calculations, we use the `kitchen sink' prepackaged Docker image \cite{kitchensink}\footnote{See also \url{https://snappy.math.uic.edu/installing.html#kitchen-sink}}  for running SnapPy \cite{snappy} within Sage \cite{sagemath} and Python \cite{python} alongside Berge's Heegaard program \cite{heegaard}. Throughout we assume SnapPy has been initialized with \mintinline{python}{import snappy}. When finding initial diagrams and braid presentations, we also employ the standalone SnapPy application \cite{snappy} and Frank Swenton's KLO \cite{KLO}.

\subsection{Census L-space knots}
First we extract the collection $\calD$ of Dunfield's L-space knots.  Using Dunfield's data \texttt{exceptional\_fillings.csv} from \cite{Dunfield2018} and \texttt{QHSolidTori.csv} from \cite{Dunfield2019}, one may obtain lists of these $630$ L-space knots and $2$ unclassified knots that comprise our collection $\calD$ using Pandas \cite{pandas} for database queries as follows:

\begin{minted}[mathescape,
	%linenos,
	numbersep=5pt,
	gobble=1,
	frame=lines,
	framesep=2mm]{python}
	import pandas
	exfil = pandas.read_csv("exceptional_fillings.csv")
	qhst = pandas.read_csv("QHSolidTori.csv")
	S3knot = exfil.loc[(exfil['kind'] == 'S3')]['cusped'].to_list();
	knownFloerSimple = qhst.loc[(qhst['floer_simple'] == 1)]['name'].to_list();
	unknownFloerSimple = qhst.loc[(qhst['floer_simple'] == 0)]['name'].to_list();
	Lspaceknot = list(set(S3knot) & set(knownFloerSimple));
	Lspaceknot.sort();  Lspaceknot.sort(key=len); # for ordering
	maybeLspaceknot = list(set(S3knot) & set(unknownFloerSimple));	
	D = Lspaceknot + maybeLspaceknot
\end{minted}

This gives the list \verb|Lspaceknot| of 630 known L-space knot complements in the SnapPy census of hyperbolic manifolds assembled from at most $9$ ideal tetrahedra. 
The two knot complements which have not yet been confirmed to be complements of L-space knots are given the list \verb|maybeLspaceknot| which is $\{ \mfld{o9\_30150}, \mfld{o9\_31440} \}$.  The two lists are concatenated as \verb|D| (which we also write as $\calD$).   The SnapPy census names for the manifolds of $\calD$ are listed in Tables~\ref{tab:censusLspaceknotsI} and \ref{tab:censusLspaceknotsII}.

\subsection{Symmetries}
To obtain the collection $\calA$ of asymmetric L-space knots we check the orders of the symmetry groups of the manifolds in $\calD$.  
In particular, the code
\begin{minted}[breaklines]{python}
	A=[mfld for mfld in D if snappy.Manifold(mfld).symmetry_group().order() == 1]
\end{minted}
returns the following set of  9 manifolds:
\begin{align*}
	\calA = \{& \mfld{t12533}, \mfld{t12681}, \mfld{o9\_38928}, \mfld{o9\_39162}, \mfld{o9\_40363},\\
	& \phantom{xxxxxxxxxxxxxxxxx} \mfld{o9\_40487}, \mfld{o9\_40504}, \mfld{o9\_40582}, \mfld{o9\_42675} \}
\end{align*}

\begin{lemma}\label{lem:symmetry}
The symmetry groups of the manifolds of $\calD-\calA$ are $\Z/2$, generated by a strong involution.
\end{lemma}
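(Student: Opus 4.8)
The plan is to settle the order of the symmetry groups by a direct SnapPy computation and then to identify the nontrivial involution topologically. First I would evaluate \texttt{snappy.Manifold(mfld).symmetry\_group().order()} over all of $\calD$ and confirm that the $9$ manifolds of $\calA$ are exactly those returning $1$, while every manifold of $\calD-\calA$ returns $2$. Since $K$ is hyperbolic, Mostow rigidity together with the solution to the knot-complement problem identifies $\Isom(S^3\setminus K)$ with the symmetries of the pair $(S^3,K)$, so this computation already shows that each symmetry group in question is $\Z/2$; what remains is to show that its generator $f$ is a strong inversion.

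The key step is to read off the action of $f$ on the peripheral torus. An orientation-preserving involution acts on $H_1(\partial(S^3\setminus K);\Z)\cong\Z^2$ through an element of $\SL_2(\Z)$ of order dividing $2$, and the only such elements are $\pm\mat{1}{0}{0}{1}$; the action is $\mat{1}{0}{0}{1}$ exactly when $f$ preserves the orientation of $K$ (the free and $2$-periodic cases) and $\mat{-1}{0}{0}{-1}$ exactly when $f$ reverses it. More generally the determinant of this cusp matrix is $+1$ or $-1$ according to whether $f$ preserves or reverses the orientation of $S^3$. I would therefore extract, for each manifold of $\calD-\calA$, the matrix by which the nontrivial isometry acts on the peripheral curves, recorded by SnapPy as the cusp map of the isometry, and verify that it equals $\mat{-1}{0}{0}{-1}$. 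Because $-I$ is central, this value is independent of SnapPy's choice of peripheral framing, so the conclusion is robust. A cusp map of $\mat{-1}{0}{0}{-1}$ shows simultaneously that $f$ preserves the orientation of $S^3$, that $f$ reverses the orientation of $K$, and that $f$ is not free; by the (resolved) Smith conjecture its fixed set is then an unknotted circle meeting $K$ in the two fixed points of the orientation-reversing involution $f|_K$, which is precisely a \emph{strong inversion}.

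As an independent conceptual check that $f$ preserves the orientation of $S^3$, I would invoke chirality: a nontrivial L-space knot has $\tau(K)=g(K)>0$ while $\tau(\bar K)=-\tau(K)$, so it cannot be amphichiral and $(S^3,K)$ admits no orientation-reversing symmetry; for the two still-unclassified manifolds, which are not yet known to be L-space knots, this is replaced by the direct cusp-map computation, so no circularity arises. The main obstacle is rigor rather than topology: SnapPy computes symmetry groups from a canonical retriangulation in floating point, so to make a census-wide claim airtight I would rerun both the order and the cusp-map computations through SnapPy's verified-computation routines, confirming the canonical cell decomposition and hence that each symmetry group is genuinely $\Z/2$ generated by an isometry acting as $\mat{-1}{0}{0}{-1}$ on the cusp. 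Finally I would confirm that no manifold of $\calD$ produces an order exceeding $2$, which rules out any larger cyclic or dihedral symmetry group and, in particular, any additional periodic symmetry.
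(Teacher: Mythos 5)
Your proposal is correct and follows essentially the same route as the paper: a census-wide SnapPy computation showing each symmetry group of $\calD-\calA$ has order exactly $2$, followed by a check that the nontrivial isometry is a strong inversion. The paper delegates the second step to SnapPy's \verb|is_invertible_knot()| method, which tests precisely the condition you spell out (the isometry acting as $-I$ on the peripheral torus), so your cusp-map analysis and the Gordon--Luecke/Smith-conjecture justification simply make explicit what that built-in call certifies.
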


\begin{proof}
First, the command
\begin{minted}{python}
	[mfld for mfld in D 
	        if snappy.Manifold(mfld).symmetry_group().order() > 2]
\end{minted}
returns an empty list, showing that the manifolds in $\calD$ are either asymmetric or have symmetry group $\Z/2$. Next, the command \verb|.is_invertible_knot()| checks whether a one-cusped manifold is strongly invertible. As the code
\begin{minted}[mathescape,
	linenos,
	numbersep=5pt,
	gobble=1,
	frame=lines,
	framesep=2mm]{python}
	noninvertible=[]
	for mfld in D:
	    S=snappy.Manifold(mfld).symmetry_group()
	    if S.is_invertible_knot() == False:
	        noninvertible.append(mfld)
	noninvertible
\end{minted}
\noindent
returns just the list of the 9 manifolds of $\calA$,  each of these $Z/2$ symmetry groups of the manifolds in $\calD-\calA$ is generated by a strong involution as claimed.
\end{proof}

\subsection{Tunnel numbers}

\begin{proof}[Proof of Theorem~\ref{thm:tunnelnumber}]
This proof splits into two parts.  In Part 1 we use SnapPy and Berge's Heegaard to (a) show every manifold of $\calD$ has tunnel number at most $2$ and (b) identify a subset $\calT_2 \subset \calD$ where every manifold of its complement $\calD-\calT_2$ has tunnel number $1$.  Necessarily $\calA \subset \calT_2$ since the asymmetric manifolds cannot have tunnel number $1$.  In Part 2 we show the manifolds of $\calT_2 - \calA$ actually have tunnel number $2$ by observing they have a toroidal Dehn filling that fails Kobayashi's criteria for having Heegaard genus $2$.  

\subsubsection{Part 1: Bounding tunnel number}
Here we use SnapPy and Berge's Heegaard to determine upper bounds on the tunnel numbers of manifolds in $\calD$.
The tunnel number of a link complement is one less than the Heegaard genus of the link exterior's splittings into a handlebody and a compression body. SnapPy provides the presentation of a manifold's fundamental group, and Heegaard checks whether that presentation is realized by a Heegaard splitting consisting of a handlebody and a compression body.   Since only the unknot has tunnel number $0$, once we find a genus $2$ Heegaard splitting for a knot complement, we know the knot has tunnel number $1$.  Further obstructions (such as the absence of certain symmetries) are required to confirm a knot for which only genus $3$ Heegaard splittings were found actually has tunnel number $2$ and not $1$.  

First we select the manifolds in $\calD$ whose fundamental group SnapPy presents with $2$ generators and Berge's Heegaard  confirms is actually realized by a genus $2$ Heegaard splitting.

\begin{minted}[mathescape,
	linenos,
	numbersep=5pt,
	gobble=0,
	frame=lines,
	framesep=2mm]{python}
import heegaard
TN1easy=[];
TNmaybemore=[];
for mfld in D:
    M=snappy.Manifold(mfld)
    G=M.fundamental_group()
    if G.num_generators()==2 and heegaard.is_realizable(G.relators()):
        TN1easy.append(mfld)
    else:
        TNmaybemore.append(mfld)
\end{minted}

This produces a list \verb|TN1easy| consisting of $473$ of the $632$ manifolds in $\calD$ which are easily confirmed to have tunnel number $1$.  The remaining  $159$ which may have tunnel number greater than $1$ (including those $9$ of $\calA$ which we already know have tunnel number $2$) are collected in the list \verb|TNmaybemore|.
        
Note that the tunnel number of a knot is bounded above by the tunnel number of any link of which it is a component.  So next we examine the tunnel numbers of the manifolds obtained by drilling a dual curve from the manifolds in \verb|TNmaybemore|.  (The {\em dual curves} are curves in the $1$--skeleton dual to the underlying triangulation of a manifold in SnapPy.  Such curves are frequently simple geodesics and may be drilled from the manifold with SnapPy.) 

\begin{minted}[mathescape,
	linenos,
	numbersep=5pt,
	gobble=0,
	frame=lines,
	framesep=2mm]{python}
TN1drilleasy=[];
TN2probably=[];
for mfld in TNmaybemore:
    M=snappy.Manifold(mfld)
    dclength=len(M.dual_curves())
    for i in range(dclength):
        N=M.drill(i)
        G=N.fundamental_group()
        if G.num_generators()==2 and heegaard.is_realizable(G.relators()):
            TN1drilleasy.append([mfld,i])
            break
    if TN1drilleasy[-1][0] != mfld:
       TN2probably.append(mfld)
\end{minted}

This produces a list \verb|TN1drilleasy| which records a manifold of \verb|TNmaybemore| and the index of its first dual curve for which SnapPy and Heegaard confirmed a genus $2$ Heegaard splitting of the drilling of that dual curve. In particular, the $137$ manifolds of \verb|TN1drilleasy| all have tunnel number 1.  The remaining $22$ manifolds are collected in \verb|TN2probably| which we also denote as $\calT_2$.  Note that $\calT_2$ necessarily contains $\calA$.

Now we confirm that the $22$ manifolds of $\calT_2 =$\verb|TN2probably| actually have genus $3$ Heegaard splittings and thus have tunnel number at most $2$.

\begin{minted}[mathescape,
	linenos,
	numbersep=5pt,
	gobble=0,
	frame=lines,
	framesep=2mm]{python}
TNcheck=[]
for mfld in TN2probably:
    G=snappy.Manifold(mfld).fundamental_group()
    TNcheck.append([mfld, G.num_generators(), heegaard.is_realizable(G.relators())])
\end{minted}

This produces a list \verb|TNcheck| which shows that each manifold in $\calT_2 =$ \verb|TN2probably| has a presentation of its fundamental group with $3$ generators that is realized by a Heegaard splitting.  Therefore these manifolds have tunnel number at most $2$ as claimed.

Finally, if any manifold in $\calT_2$ had tunnel number $1$, then it would have a genus $2$ Heegaad splitting.  In particular, the manifold would then admit a strong involution induced by the hyperelliptic involution of the genus $2$ Heegaard surface.  Therefore, since the manifolds of $\calA \subset \calT_2$ are asymmetric, they cannot have tunnel number $1$ and must have tunnel number $2$.

 

\subsubsection{Part 2: Confirming tunnel numbers}

As noted in Part 1, the tunnel number of a knot is one less than the Heegaard genus of the knot exterior.  Since Heegaard genus may only decrease upon Dehn filling, the Heegaard genus of any Dehn filling gives a lower bound on the tunnel number of a knot plus one.  Knowing  that the manifolds of $\calT_2 - \calA$ have tunnel number at most $2$, we will confirm they have tunnel number $2$ by demonstrating they have a Dehn filling of Heegaard genus at least $3$.

\medskip
It so happens that each manifold of $\calT_2 - \calA$ admits at least one Dehn filling to a graph manifold, a closed orientable $3$--manifold with essential tori that decompose it into Seifert fibered spaces. These fillings may be observed through Dunfield's survey of exceptional surgeries \cite{Dunfield2018}.  For each manifold of $\calT_2 - \calA$, a Dehn filling producing a graph manifold and the Regina notation \cite{regina} for that graph manifold is listed in Table~\ref{tab:graphmanifolds}. Four of the manifolds of $\calT_2 - \calA$ have another graph manifold filling which is not listed as those filled manifolds have $2$ generator fundamental group presentations that correspond to a genus $2$ Heegaard splitting, as confirmed by Heegaard.  Each filling in Table~\ref{tab:graphmanifolds}, except for the filling of {\tt o9\_29751} marked with $\dagger$, is a graph manifold where two Seifert fibered spaces are glued together along a single torus.  We will address these now and the $\dagger$ marked one thereafter.

\bigskip
Kobayashi classifies the closed orientable $3$--manifolds with Heegaard genus $2$ that contain an essential torus \cite{kobayashi}.  
Genus $2$ manifolds with a single separating torus in its torus decomposition are formed from either
\begin{itemize}
    \item[(i)] a Seifert fibered space over the disk with 2 exceptional fibers and the exterior of a $1$--bridge knot in a lens space,
    \item[(ii)] a Seifert fibered space over the \mobius band with at most 2 exceptional fibers and the exterior of a $2$--bridge knot in $S^3$, or 
    \item[(iii)] a Seifert fibered space over the disk with 2 or 3 exceptional fibers and the exterior of a $2$--bridge knot in $S^3$. 
\end{itemize}
In each case, in the common torus the regular fiber is identified with the meridian of the knot. (The remaining two cases of the main theorem of \cite{kobayashi} involve manifolds with a torus decomposition involving either (iv) two tori or (v) a non-separating torus.)
Since we are considering graph manifolds, these exteriors of $2$--bridge knots and $1$--bridge knots in lens spaces must be Seifert fibered.  Since a meridional filling of these knot exteriors must extend to a Seifert fibration of $S^3$ or a lens space, their meridians must intersect their regular fibers just once.  Thus, if a graph manifold obtained from gluing two Seifert fibered spaces together along an incompressible torus has Heegaard genus $2$, then the regular fibers of the two pieces in the common torus intersect just once.

The graph manifolds that appear in Table~\ref{tab:graphmanifolds} (not marked by $\dagger$) have Regina descriptions of the type 
\begin{center}
    {\tt SFS [D: (p,q) (r,s)] U/m SFS [D: (t,u) (v,w)], m = [ a,b | c,d ]} 
\end{center}
or
\begin{center}
     {\tt SFS [D: (p,q) (r,s)] U/m SFS [M/n2: (t,u)], m = [ a,b | c,d ]}.
\end{center}
Regina uses the normalization for Seifert fibered manifolds with boundary where each exceptional fiber is described with a relatively prime pair $(x,y)$ so that $0<y<x$.  On the boundary torus, a basis is given in terms of a fiber $(1,0)$ and a curve $(0,1)$ representing the base orbifold. With two Seifert fibered spaces glued together along a common boundary torus, the matrix $m$ expresses the basis from the second Seifert fibered space in terms of the basis from the first. In particular, the regular fiber of the second is the curve $(1,0)m = (a,b)$ (the first row of $m$) with respect to the basis from the first.
Consequently, the regular fibers of the two pieces of these kinds of graph manifolds intersect just once exactly when $b$ (the upper right entry of $m$) is $\pm1$.  

A quick check shows that among the manifolds in Table~\ref{tab:graphmanifolds}, only the two of type 
\begin{center}
    {\tt SFS [D: (p,q) (r,s)] U/m SFS [M/n2: (t,u)], m = [ a,b | c,d ]}
\end{center} 
have $b=\pm1$ in their gluing matrix $m$.  However, since the second piece is a Seifert fibered space over the \mobius band, its regular fiber must be identified with a slope that gives an $S^3$ filling of the first piece.  Both of the first pieces are exteriors of trefoils and have meridians of slope $(1,1)$.  Hence $(1,1)m = (a+c,b+d)$ must be the fiber slope $(1,0)$ for the graph manifold to have Heegaard genus $2$.  Since $b=\pm1$ and $d=0$ in these two cases, neither have genus $2$.  

The Seifert fibered pieces of the form {\tt SFS [D: (2,1) (2,1)]} admit an alternative fibration as {\tt SFS [M/n2:]}. Aside from the solid torus, all other Seifert fibered spaces with boundary have unique Seifert fibrations.  In Regina's parameterization of boundary curves of {\tt SFS [D: (2,1) (2,1)]}, the regular fiber of {\tt SFS [M/n2:]} has slope $(1,1)$.  Thus, when using the alternative fibration on the first piece of
\begin{center}
 {\tt SFS [D: (2,1) (2,1)] U/m SFS [D: (t,u) (v,w)], m = [ a,b | c,d ]},
\end{center}
the regular fibers of the two pieces of these kinds of graph manifolds intersect just once exactly when $b+d$  is $\pm1$.  Five of the graph manifolds in Table~\ref{tab:graphmanifolds} have a {\tt SFS [D: (2,1) (2,1)]} piece, but only three of them have $b+d = \pm1$.  Of those three, the other Seifert fibered piece is not the exterior of a $(2,n)$--torus knot. 

Taken together, none of the manifolds in Table~\ref{tab:graphmanifolds}  (aside from  {\tt o9\_29751(1,1)}) admit presentations satisfying any of the three criteria of Kobayashi listed above.  Hence these manifolds must have Heegaard genus at least $3$. Thus the manifolds of $\calT_2-\calA$ (except \mfld{o9\_29751}) must have tunnel number at least $2$.  Hence their tunnel numbers are all exactly $2$.

\bigskip
The $\dagger$ marked filling {\tt o9\_29751(1,1)} in Table~\ref{tab:graphmanifolds} has a torus decomposition that almost fits Kobayashi's criterion (iv) for a genus $2$ manifold, but it fails the gluing requirements. Unfortunately, this failure's dependence upon understanding Regina's orientation conventions is somewhat delicate. So rather than discussing that we observe {\tt o9\_29751(1,1)} cannot have Heegaard genus $2$ by other means.

The knot exterior \mfld{o9\_29751} has a surgery description as {\tt L10n72(3,-4)(0,0)(3,-2)} where the filling {\tt o9\_29751(p,q)} corresponds to {\tt L10n72(3,-4)(-p+q,q)(3,-2)}. (See Table~\ref{tab:surgerydescriptions} and the discussion in Section~\ref{sec:diagrams}.)  Since {\tt L10n72} is a strongly invertible link, any Dehn filling of \mfld{o9\_29751} can be viewed as the double branched cover of the corresponding rational tangle filling of the branch locus of the two-fold quotient of {\tt L10n72(3,-4)(0,0)(3,-2)}.   

Figure~\ref{fig:L12n243} shows the $2$-component link {\tt L12n243} whose double branched cover is the manifold {\tt o9\_29751(1,1)}. The green arc is the core arc of the rational tangle filling; its exterior is a tangle whose double branched cover is \mfld{o9\_29751}.  (One may further observe that the torus decomposition of {\tt o9\_29751(1,1)} is reflected in the Conway sphere decomposition of {\tt L12n243} also indicated in Figure~\ref{fig:L12n243}.)
Replacing a neighborhood of the green arc by another rational tangle corresponds to Dehn surgery along the knot that is the lift of the arc in the double branched cover, and hence to a Dehn filling of {\tt L12n243}. 
Observe now that any link resulting from a rational tangle replacement has one component that is an unknot and another that is the knot {\tt 8\_21}.    
Since the knot {\tt 8\_21} is a Montesinos knot (and not a two-bridge knot), it has bridge number at least $3$.  Therefore {\tt L12n243} and any two-component link obtained by such a rational tangle replacement must have bridge number at least $4$. Indeed, one can find bridge number $4$ presentations for these links.

Since {\tt o9\_29751} has symmetry group $\Z/2$ as we determined in Lemma~\ref{lem:symmetry}, Thurston's Hyperbolic Dehn Filling Theorem implies that all but finitely many Dehn fillings of {\tt o9\_29751} will also have symmetry group $\Z/2$.  (See \cite[Theorem 5.2]{Auckly} for example.) Therefore, for all but finitely many of the two-component links obtained by rational tangle replacements along the green arc in Figure~\ref{fig:L12n243}, there is no other knot or link that has the same double branched cover. Yet if {\tt o9\_29751} were to have tunnel number $1$, then any Dehn filling would have Heegaard genus at most $2$ and therefore be the double branched cover of a knot or link with a $3$--bridge presentation.  This cannot be since all but finitely many of these links have bridge number at least $4$.   Therefore the tunnel number of {\tt o9\_29751} must be at least $2$.  Hence {\tt o9\_29751} must have tunnel number exactly $2$.

 \begin{figure}
     \centering
     \includegraphics[height = 6cm]{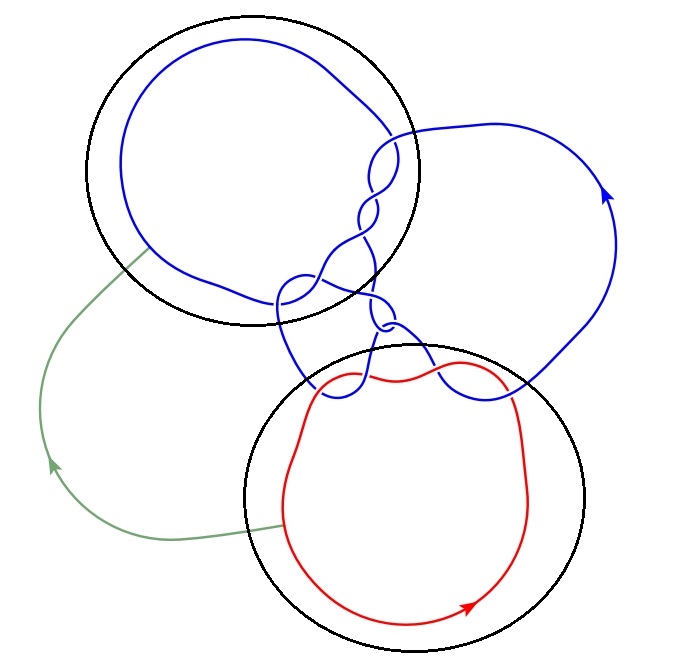}
     \caption{The link {\tt L12n243} is shown with its decomposition along Conway spheres.   The double branched cover of this link is the graph manifold {\tt o9\_29751(1,1)}.  The green arc lifts to the core curve of the Dehn filling.}
     \label{fig:L12n243}
 \end{figure}

This completes the proof of Theorem~\ref{thm:tunnelnumber}.
\end{proof}

\begin{table}[]
    \centering
    \caption{For each manifold of $\calT_2-\calA$, a Dehn filling that yields a graph manifold is listed.  The graph manifold is given in the notation of Regina.  }
    \label{tab:graphmanifolds}

    \begin{tabular}{>{\ttfamily}l >{\ttfamily}l}
    \toprule
    {\rm Dehn filling} & {\rm Graph Manifold}\\
    \midrule
t09284(0, 1)	&	SFS [D: (2,1) (2,1)] U/m SFS [D: (2,1) (3,2)], m = [ -1,3 | -1,2 ]	\\

t09450(1, 1)	&	SFS [D: (2,1) (2,1)] U/m SFS [D: (3,1) (3,2)], m = [ 1,-2 | 0,1 ]	\\

t09633(0, 1)	&	SFS [D: (2,1) (3,1)] U/m SFS [D: (2,1) (3,1)], m = [ 1,2 | 0,1 ]	\\

t10496(0, 1)	&	SFS [D: (2,1) (2,1)] U/m SFS [D: (3,1) (3,2)], m = [ -1,2 | -1,1 ]	\\

o9\_28751(1, 1)	&	SFS [D: (2,1) (2,1)] U/m SFS [D: (3,2) (5,2)], m = [ 1,-2 | 0,1 ]	\\

$\dagger$ o9\_29751(1, 1)	&	SFS [D: (2,1) (2,1)] U/m SFS [A: (2,1)] U/n SFS [D: (2,1) (3,2)],\\ & m = [ 0,-1 | 1,0 ], n = [ 1,1 | 0,1 ]	\\

o9\_32314(0, 1)	&	SFS [D: (2,1) (3,1)] U/m SFS [M/n2: (4,3)], m = [ 0,1 | 1,0 ]	\\

o9\_33380(0, 1)	&	SFS [D: (2,1) (3,2)] U/m SFS [D: (2,1) (3,2)], m = [ -2,3 | -1,1 ]	\\

o9\_33944(0, 1)	&	SFS [D: (2,1) (2,1)] U/m SFS [D: (2,1) (3,2)], m = [ 2,3 | 1,2 ]	\\

o9\_33959(0, 1)	&	SFS [D: (2,1) (3,2)] U/m SFS [D: (2,1) (3,2)], m = [ 1,2 | 0,1 ]	\\

o9\_34409(0, 1)	&	SFS [D: (2,1) (3,1)] U/m SFS [D: (3,1) (3,2)], m = [ -1,2 | -1,1 ]	\\

o9\_36380(0, 1)	&	SFS [D: (2,1) (3,2)] U/m SFS [M/n2: (5,3)], m = [ 0,-1 | 1,0 ]	\\

o9\_40026(0, 1)	&	SFS [D: (2,1) (3,1)] U/m SFS [D: (2,1) (5,3)], m = [ -1,2 | -1,1 ]	\\
\bottomrule
    \end{tabular}
\end{table}

\subsection{Diagrams and positive braids}\label{sec:diagrams}

Presently, SnapPy only has diagrams for knots up to 15 crossings and links up to 14 crossings, tabulated by \cite{Hoste1998}. However, hyperbolic knot complements assembled from up to $8$ ideal tetrahedra are catalogued by \cite{CDW, CKP, CKM} and presented with diagrams or as members of certain families.  Dunfield's work \cite{Dunfield2018} extends this catalogue to hyperbolic knot complements assembled from $9$ ideal tetrahedra, but it does not offer any diagram or familiar presentation of them.

Of the knots in $\calD$, only $263$ admit ideal triangulations with at most $8$ ideal tetrahedra.  Diagrams for these may be obtained from  \cite{CDW, CKP, CKM}.  In general, diagrams for the remaining $369$ knots in $\calD$ have not been determined.

Among the $22$ knots in $\calT_2$, only six may be assembled from fewer than $9$ ideal tetrahedra, and hence diagrams have already been determined for them. In $\calA$ are the two manifolds $\mfld{t12533}$ and $\mfld{t12681}$ which are listed as the knots $\mfld{K8\_290}$ and $\mfld{K8\_296}$ in the Champanerkar-Kofman-Mullen tabulation \cite{CKM}.  There, these two knots are presented as the generalized twisted torus knots $T(5, 6, 4, -2, 3, 5)$ and $T(7, 3, 5, 5, 4, 4)$. 
In $\calT_2-\calA$ are the four manifolds $\mfld{t09284}$, $\mfld{t09450}$, $\mfld{t09633}$, and $\mfld{t10496}$ which are listed as the knots $\mfld{K8\_186}$, $\mfld{K8\_189}$, $\mfld{K8\_195}$, and $\mfld{K8\_220}$.  These are presented as the generalized twisted torus knots $T(5, 6, 3, -1, 2, 2)$, $T(10, 6, 4, 3)$, $T(7, 5, 5, 3)$, and $T(10, 3, 4, 3, 2, -3)$.  See \cite{CKM} for the notation.

\begin{figure}
\begin{tikzpicture}
\pic[rotate=90,
braid/.cd,
every strand/.style={ultra thick},
height=.6cm,
width=.43cm,
gap=0.2] {braid={s_1 s_1 s_2 s_2 s_1 s_2 s_2 s_2 s_2 s_2 s_2 s_2 s_2 s_2 s_1 s_2 s_2 s_3 s_2 s_1 s_1 s_2 s_2 s_1 s_3 s_2 s_2}};
\end{tikzpicture}
\caption{A positive braid whose closure is the hyperbolic asymmetric L-space knot with complement 't12533'.}
\label{fig:braid}
\end{figure}
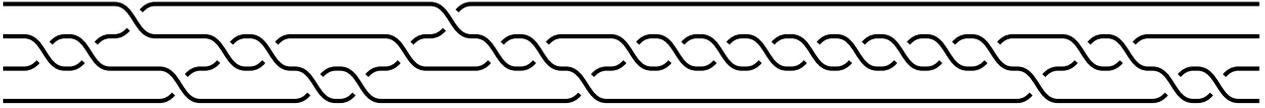

\medskip

The original goal of this note was  to  find not only diagrams for the knots of $\calA$ but  also presentations of these knots as closures of  positive braids.  Theorem~\ref{thm:asymbraidpos} records the achievement of this goal  and Theorem~\ref{thm:TN2braidpos} records its extension to all of $\calT_2$ which we prove below.
Note that the identification of  the manifolds \mfld{t12681}, \mfld{t09450}, and \mfld{t09633} as the complements of the generalized twisted torus knots $T(7, 3, 5, 5, 4, 4)$, $T(10, 6, 4, 3)$, and $T(7, 5, 5, 3)$ 
by \cite{CKM} already achieve this goal for these knots.

\begin{proof}[Proof of Theorem~\ref{thm:asymbraidpos} and Theorem~\ref{thm:TN2braidpos}]
For each manifold of $\calA$, 
Table~\ref{tab:braidwords} gives a positive braid word whose closure is a knot with the manifold as its complement.  For the remaining manifolds of $\calT_2-\calA$, Tables~\ref{tab:braidwordsT2} and \ref{tab:braidwordsT2-2} give positive braid words.   As an example, Figure~\ref{fig:braid} illustrates a positive braid whose closure has
 \mfld{t12533} 
as its complement.  The braid word is given as a list of positive integers where the integer $n$ represents the standard braid generator $\sigma_n$.  These may be verified using SnapPy by the command 
\begin{minted}{python}
	snappy.Link(braid_closure=WORD).exterior().identify()
\end{minted}
where \mintinline{python}{WORD} 
is the list of integers giving the braid word being checked.
 The output will be the list of names that SnapPy has for the manifold that is the complement of the closure of the braid.
\end{proof}

The above proof only gives confirmation that positive braidwords for the manifolds of $\calT_2$ have indeed been found.
To actually obtain the braid words, we need to first find a diagram for the knot.

Given a triangulated $1$--cusped manifold with a known $S^3$ filling, it must be the complement of a knot in $S^3$.  There are currently no implemented algorithms for computing a knot diagram given this data\footnote{This problem will appear on a problem list compiled from the ICERM workshop Perspectives on Dehn Surgery.}, so we attempt to find a diagram for the knot by the following method:

Using SnapPy, we drill out a small number of short geodesics (dual curves) until we obtain a manifold that SnapPy recognizes as the complement of a hyperbolic link for which it has a corresponding link diagram. These are prime links of at most $14$ crossings.  SnapPy can confirm there are isometries between the drilled manifold and the link complement while supplying the actions of these isometries on the cusps.  For each isometry, the action on the cusps determines a slope for each link component that is either the image of a meridian of a drilled-out geodesic or the original $S^3$ filling slope.

Moving to KLO, we create a surgery diagram from the SnapPy link diagram with surgery coefficients given by the slopes.  We use KLO to assist in the reduction of the surgery diagram into a diagram of the knot (with slope $\infty$). In principle, this can be done through a sequence of Rolfsen twists, adding and removing $\infty$--sloped unknotted components as needed \cite{rolfsen}.

For the manifolds of $\calT_2$, we only needed to drill at most two geodesics before SnapPy identified the resulting manifold as the complement of a link of unknots.  
Furthermore, for each manifold except \mfld{o9\_28751} we were able to find
\begin{itemize}
\item an isometry of the drilled manifold to the link complement that took the $S^3$ filling slope of the original cusp to an $\infty$ slope of one of the link complements, and
\item restricting this link to the sublink of the remaining components yielded either an unknot or a Hopf link.
\end{itemize}
Such surgery descriptions of all the manifolds in $\calT_2$ are given in Table~\ref{tab:surgerydescriptions}.
For two drillings of \mfld{o9\_28751}, the only recognized link is  \mfld{L12n2002} where every sublink of two-components is a $(2,4)$ torus link. Up to symmetries, this gives the surgery description $L12n2002(1,0)(-5,2)(-3,2)$ of \mfld{o9\_28751} which may be reduced after an appropriate insertion of an unknot.  However, examining three drillings of \mfld{o9\_28751} yields $L14n60453(-2,3)(3,-1)(1,0)(-1,2)$ where the surgery sublink is just a chain link that may be reduced by a sequence of Rolfsen twists.

\begin{table}[h]
\caption{Surgery descriptions for the knots in $\calA$ (left) and $\calT_2-\calA$ (right).  The knot corresponds to the component with the $(1,0)$ filling}
\label{tab:surgerydescriptions}
    \centering
\begin{tabular}{ccc}  
    \begin{tabular}{ll}
    \toprule
    Manifold & Surgery Description\\
    \midrule
    \mfld{t12533} & L14n58444(5,2)(1,2)(1,0)\\
    \mfld{t12681} & L12n1968(1,3)(7,2)(1,0)\\
    \mfld{o9\_38928} & L13n9833(-3,1)(1,-2)(1,0)\\
    \mfld{o9\_39162} & L12n1968(1,4)(5,1)(1,0)\\
    \mfld{o9\_40363} & L12n1968(1,0)(7,2)(1,4)\\
    \mfld{o9\_40487} & L13n8037(1,-2)(-1,1)(1,0)\\
    \mfld{o9\_40504} & L13n9833(-2,1)(1,-3)(1,0)\\
    \mfld{o9\_40582} & L14n58444(-1,1)(2,-1)(1,0)\\
    \mfld{o9\_42675} & L11n425(-1,-2)(3,1)(1,0)\\
    \bottomrule\\
    \\
    \\
    \\
    \end{tabular}

&
\quad 
&

    \begin{tabular}{ll}
    \toprule
    Manifold & Surgery Description\\
    \midrule   
    \mfld{t09284} & L10n72(1,-4)(1,0)(-3,1)\\
    \mfld{t09450} & L13n9547(1,-2)(-3,2)(1,0)\\
    \mfld{t09633} & L11n345(1,-4)(5,-1)(1,0)\\
    \mfld{t10496} & L12n1925(3,-2)(1,-1)(1,0)\\
    \mfld{o9\_28751} & L14n60453(-2,3)(3,-1)(1,0)(-1,2) \\
    \mfld{o9\_29751} & L10n72(3,-4)(1,0)(3,-2)\\
    \mfld{o9\_32314} & L10n86(1,0)(1,4)(5,1)\\
    \mfld{o9\_33380} & L13n7625(1,2)(3,2)(1,0)\\
    \mfld{o9\_33944} & L13n7625(1,2)(7,3)(1,0)\\
    \mfld{o9\_33959} & L14n56927(3,1)(1,2)(1,0)\\
    \mfld{o9\_34409} & L12n1952(5,-4)(-1,1)(1,0)\\
    \mfld{o9\_36380} & L10n86(1,0)(2,5)(7,3)\\
    \mfld{o9\_40026} & L11n347(1,4)(-1,0)(5,1)\\
    \bottomrule
    \end{tabular}

     \end{tabular}
\end{table}

Since the surgery coefficients on this unknot or Hopf sublink must present $S^3$, reduction of this sublink to the empty link by Rolfsen twists is straightforward. When this sublink is an unknot, its surgery coefficient must be of the form $-1/n$ for some integer $n$, so it may be eliminated by performing $n$ Rolfsen twists upon it.  When this sublink is a Hopf link, then one may reduce surgery coefficients by performing a sequence of Rolfsen twists alternately on each component until one component has a surgery coefficient of the form $-1/n$, which then may be eliminated by performing $n$ Rolfsen twists.  Thereafter the remaining unknot component may be eliminated.  For the surgery descriptions given in Table~\ref{tab:surgerydescriptions}, at least one component of the surgery sublink already has a surgery coefficient of the form $-1/n$ except for \mfld{o9\_29751} and \mfld{o9\_36380}.

It turned out that in many cases the last remaining component of the sublink was either a positive braid axis for the resulting knot or quite close to being one.  While in each case a positive braid presentation can be found by hand without too much trouble, KLO and SnapPy can aid in finding such a presentation.  The knot can be transferred from KLO to SnapPy by exporting and importing a PLink file, where then SnapPy can use the diagram to find a braid word whose closure is the link.  After possibly having SnapPy simplify the diagram, in each case SnapPy coincidentally found a positive (or negative) braid word. 

While the search for surgery diagrams can be scripted in SnapPy, the manipulation in KLO presently requires human intervention.

\subsection{The MFW bound on braid index}

\begin{proof}[Proof of Theorem~\ref{thm:MFWbounds}]
Recall from the introduction that the MFW bound on the braid index of a link 
is $(d_+ + d_-)/2 + 1$ where $d_+$ and $d_-$ are the max and min degrees of $v$ in the $P(v,z)$ version of the HOMFLY-PT polynomial.
Sage has routines in its Link class to compute a $P(a,z)$ version of the polynomial which is related to the $P(v,z)$ version by $a=v^{-1}$.  Hence the average of the max and min degrees of $v$ in the MFW bound equals average of the max and min degrees of $a$ in the polynomial $P(a,z)$.  Furthermore, rather than asking for the minimum degree of $a$ in $P(a,z)$, we may instead ask for the maximum degree of $a$ in $P(a^{-1},z)$.

With our list of braid words for the manifolds of $\calT_2$, we may then enter our links into Sage, compute the MFW bounds, and compare to the braid index of our braid words.
\begin{minted}[mathescape,
	linenos,
	numbersep=5pt,
	gobble=0,
	frame=lines,
	framesep=2mm]{python}
output = []

for word in wordlist:
    L = Link(B(word))
    hp = L.homfly_polynomial('a','z','az')
    braidindex = max(word)+1
    MFWbound = hp(a,z).degree(a) + hp(a^-1,z).degree(a))/2 + 1
    data=[snappy.Link(L).exterior().identify(),  (braidindex, MFWbound)]
    output.append(data)

print(output)
\end{minted}

For the words which do not realize the MFW bounds, we allow SnapPy to work harder at attempting to simplify the diagram.  

\begin{minted}[mathescape,
	linenos,
	numbersep=5pt,
	gobble=0,
	frame=lines,
	framesep=2mm]{python}
L=snappy.Link(braid_closure=word)  # word is a braid word that needs reduction
L.simplify(mode='global',type_III_limit=1000)
newword=L.braid_word()
braidindex=max(newword)+1
print(braidindex, newword)
\end{minted}

In all cases, we find that this succeeds in producing positive braids that realize the MFW bound.
\end{proof}

\subsection{An illustration of the passage from manifold to diagram}
We illustrate this process with the manifold \mfld{o9\_40504}.
The code
\begin{minted}[mathescape,
	linenos,
	numbersep=5pt,
	gobble=0,
	frame=lines,
	framesep=2mm]{python}
M = snappy.Manifold('o9_40504')
for i in range(3):
    print(M.drill(i).identify())
    
for i in range(3):
    print(M.drill(0).drill(i).identify())
\end{minted}
informs us that two simple drillings are isometric to known link complements:  The drilling \verb|M-0-1| is isometeric to \mfld{L13n9833} while the drilling \verb|M-0-2| is isometric to \mfld{L11n425}.
We shall work with the first of these.  Writing
\begin{minted}[mathescape,
	linenos,
	numbersep=5pt,
	gobble=0,
	frame=lines,
	framesep=2mm]{python}
M.drill(0).drill(1).is_isometric_to(snappy.Manifold('L13n9833'),1)
\end{minted}
lists the action on the cusps of the isometries between \verb|M-0-1| and \mfld{L13n9833} as
\begin{minted}[mathescape,
	linenos,
	numbersep=5pt,
	gobble=0,
	frame=lines,
	framesep=2mm]{python}
[0 -> 2   1 -> 1   2 -> 0
 [-1 -4]  [ 1  0]  [-2 1]
 [ 0  1]  [-3 -1]  [ 1 0]
 Does not extend to link, 
 0 -> 2  1 -> 0   2 -> 1 
 [1  4]  [-3 -1]  [ 1 -1]
 [0 -1]  [ 2  1]  [-2  1]
 Does not extend to link]
\end{minted}
In both of these isometries, the $(1,0)$ slope on the original cusp of $M$ is taken to the $(1,0)$ slope on cusp $2$ of \mfld{L13n9833}.
Since the meridians of the drilled curves have slope $(1,0)$ on their cusps, the action of isometries on the cusps indicates that \mfld{o9\_40504} has surgery descriptions as both \mfld{L13n9833(-2,1)(1,-3)(0,0)} and \mfld{L13n9833(-3,2)(1,-2)(0,0)}.  These surgery descriptions can be confirmed in SnapPy.

Next, using the SnapPy application, entering 
\begin{verbatim}
    Manifold('L13n9833').plink()
\end{verbatim}
or even
\begin{verbatim}
    Manifold('L13n9833').browse()
\end{verbatim}
give diagrams of the link \mfld{L13n9833} along with the indexing of which components correspond to which cusps.  See Figure~\ref{fig:plinkbrowse}.

\begin{figure}
\centering
 \includegraphics[height = 7.5cm]{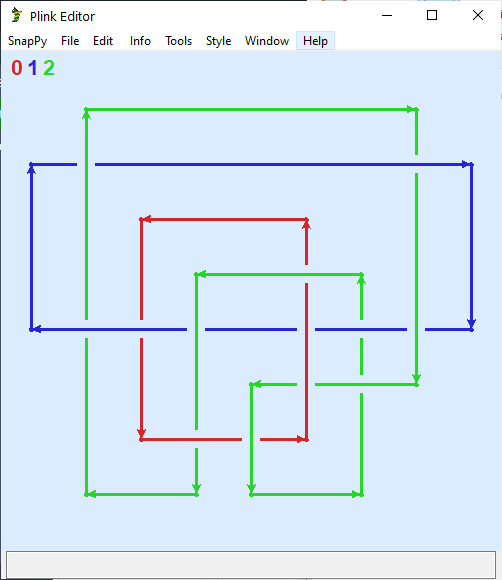} \qquad \includegraphics[height = 7.5cm]{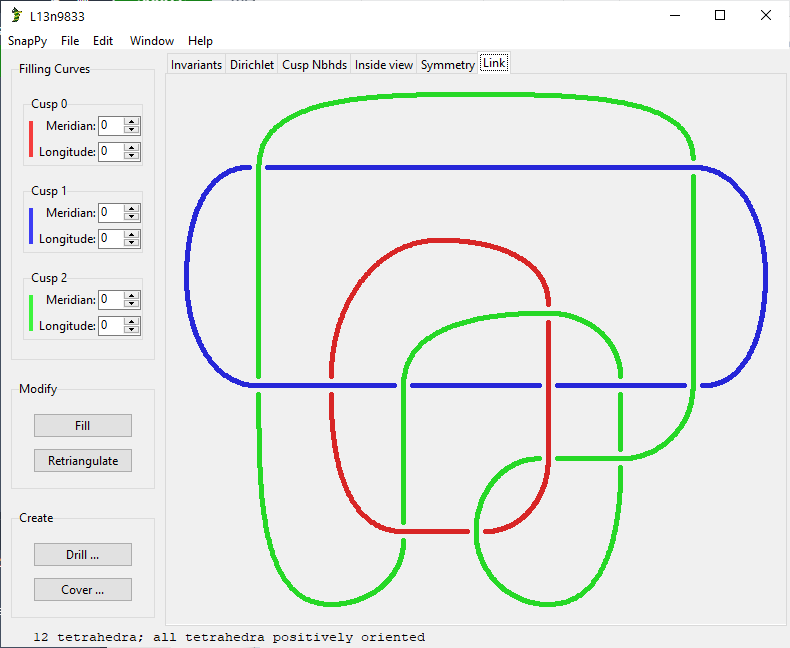} 
\caption{A diagram of the link \mfld{L13n9833} in PLink (left) and in the SnapPy browser (right).}
\label{fig:plinkbrowse}
\end{figure}

Using the diagram, the surgery description may now be drawn in KLO as in Figure~\ref{fig:KLOinitialdiagram} and manipulated through a sequence of Rolfsen twists until a knot diagram is obtained. The key steps on the manipulation are shown in Figure~\ref{fig:KLOtransformations}. 
After producing the diagram in KLO, from the {\em Export} menu of KLO we select {\em diagram for SnapPea} to save a PLink diagram of the knot.

\begin{figure}
    \centering
    \includegraphics[height =7.5cm]{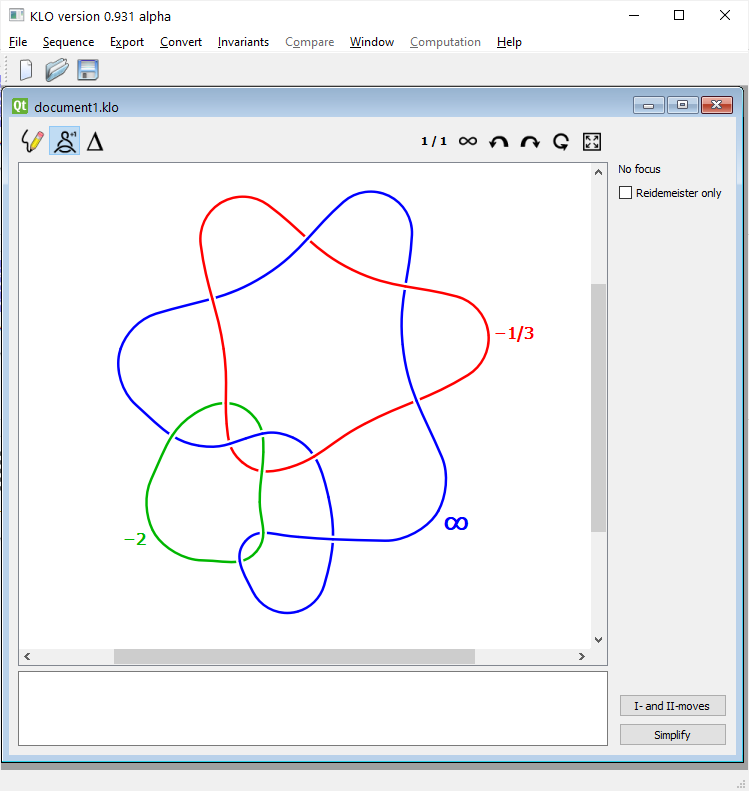}
    \caption{A surgery diagram of the knot \mfld{o9\_40504} on the link \mfld{L13n9833} drawn in KLO.}
    \label{fig:KLOinitialdiagram}
\end{figure}

\begin{figure}
    \centering
    (a)\includegraphics[width = .3\textwidth]{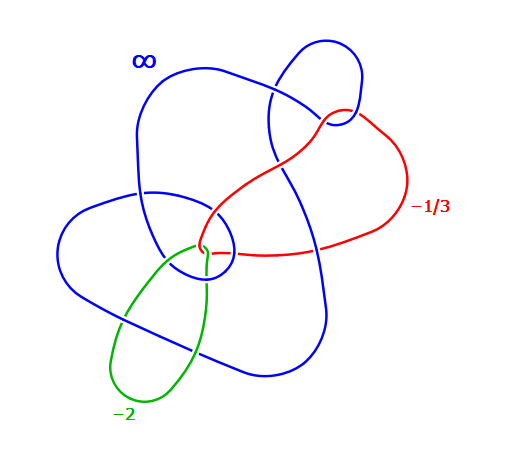} 
    (b)\includegraphics[width = .3\textwidth]{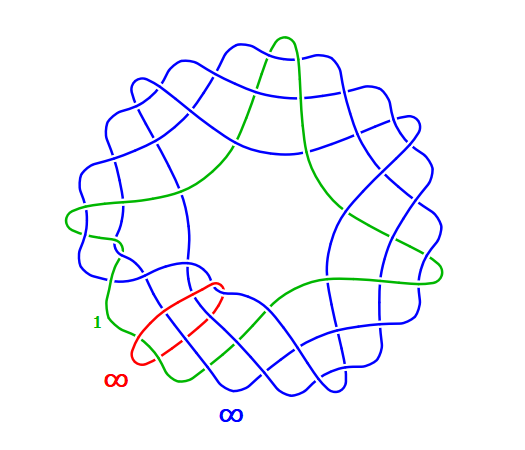} 
    (c)\includegraphics[width = .3\textwidth]{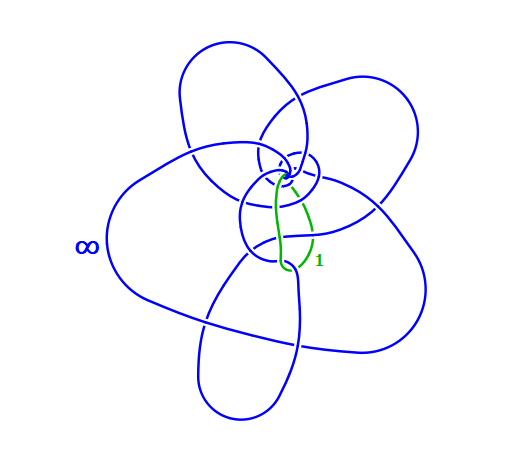} \\
    (d)\includegraphics[width = .3\textwidth]{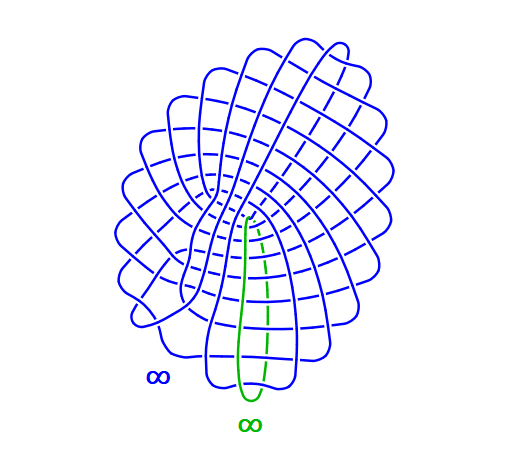} 
    (e)\includegraphics[width = .3\textwidth]{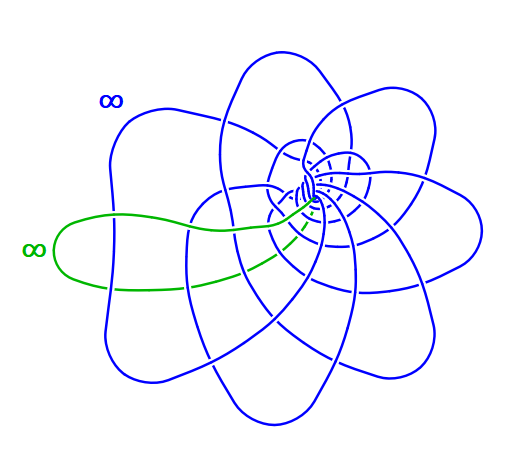} 
    (f)\includegraphics[width = .3\textwidth]{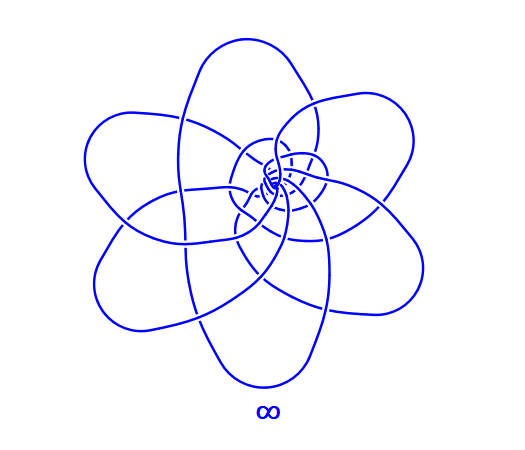} 
    \caption{A sequence of transformations of the surgery diagram of the knot \mfld{o9\_40504} in KLO. (a) The surgery diagram from Figure~\ref{fig:KLOinitialdiagram} has been simplified and prepared for  Rolfsen twists along the unknot with surgery coefficient $-1/3$. (b) The result of $3$ Rolfsen twists is shown.  The unknot which had surgery coefficient $-2$ now has coefficient $1$.  (c) The diagram is now prepared for a Rolfsen twist along the unknot with surgery coefficient $1$.
    (d) The result of the $-1$ Rolfsen twist is shown.  (e) The result with the twisting curve is simplified in KLO.  (f) The twisting curve is discarded and the knot diagram is further simplified in KLO.}
    \label{fig:KLOtransformations}
\end{figure}

With the exported diagram named \verb|diagram.lnk| we may import it back into SnapPy.
Then
\begin{minted}[mathescape,
	linenos,
	numbersep=5pt,
	gobble=0,
	frame=lines,
	framesep=2mm]{python}
K = snappy.Manifold('diagram.lnk').link()
# for orientation correction, may need K = K.mirror()
K.braid_word()  
\end{minted}
returns the list
\begin{quote}
[1, 2, 1, 2, 1, 3, 2, 4, 1, 3, 5, 2, 4, 6, 1, 3, 5, 7, 2, 4, 6, 3, 5, 2, 4, 1, 3, 2, 3, 4, 5, 6, 7, 7, 6, 5, 4, 3, 2, 4, 5, 4, 6, 5, 7, 4, 6, 5, 4, -3, 4]
\end{quote}
that represents the braid word.   One observes that this is not a positive (or negative) braid word.  Performing further simplification within SnapPy with
\begin{minted}[mathescape,
	linenos,
	numbersep=5pt,
	gobble=0,
	frame=lines,
	framesep=2mm]{python}
K.simplify(mode='global',type_III_limit=1000) # K.simplify('global') may be sufficient
K.braid_word() 
\end{minted}
returns the list 
\begin{quote}
[1, 2, 1, 3, 1, 4, 2, 4, 3, 5, 5, 4, 3, 5, 2, 3, 3, 4, 3, 3, 2, 1, 2, 3, 3, 4, 4, 3, 2, 3, 4, 5, 4, 3, 2, 3, 4, 3, 2, 3, 2, 4, 2, 4, 4, 3, 4]
\end{quote}
which represents a positive braid of braid index $6$, realizing the MFW bound.

\section{Data}

Positive braid words that realize the MFW bound are recorded in Table~\ref{tab:braidwords} for knots in $\calA$ and in Tables~\ref{tab:braidwordsT2} and \ref{tab:braidwordsT2-2} for knots in $\calT_2-\calA$.  Also given are their Knot Census name, genus, word length, braid index, and MFW bound.  For each knot the Alexander polynomial $\Delta(t)$ (which one may calculate using SnapPy within Sage) is also given as 
a decreasing sequence of nonnegative integers $n_g>n_{g-1}> \cdots > n_0 = 0$ so that 
\[ \Delta(t) = (-1)^{g} + \sum_{i=1}^{g} (-1)^{g+i} (t^{n_i}+t^{-n_i})\]
and $g$ is the knot genus.
All L-space knots have symmetrized Alexander polynomials $\Delta(t)$ of this special form: the degree of $\Delta(t)$ is the knot genus \cite{genusbounds} and the non-zero coefficients of $\Delta(t)$ alternate between $1$ and $-1$ \cite{Ozsvath2005}.  See also \cite[Corollary 9]{HWbotanygeography}.

The entire list of the manifolds of $\calD$ is split into Tables~\ref{tab:censusLspaceknotsI} and \ref{tab:censusLspaceknotsII}.   The first contains those in the SnapPy census of manifolds assembled with up to 8 ideal tetrahedra.  The second contains those in the SnapPy census assembled with 9 ideal tetrahedra and no fewer.


\begin{table}
\caption{Each manifold of $\calD$ is listed with a positive braid word whose closure is a knot with the manifold as its complement, the length and index of the braid, the genus of the knot, and the Alexander polynomial of the knot.}
\label{tab:braidwords}
\begin{tabular}{@{}lllll@{}}
\toprule
manifold  &\multicolumn{4}{l}{braid word}\\
knot census & genus & word length  & braid index & MFW bound \\
& \multicolumn{4}{l}{Alexander polynomial}\\
\midrule

\mfld{t12533} 
& \multicolumn{4}{l}{\parbox{0.7\textwidth}{
1, 1, 2, 2, 1, 2, 2, 2, 2, 2, 2, 2, 2, 2, 1, 2, 2, 3, 2, 1, 1, 2, 2, 1, 3, 2, 2
}}\\
\mfld{K8\_290} & 12 & 27  & 4  & 4 \\
& \multicolumn{4}{l}{\parbox{0.7\textwidth}{
$12,11,8,7,5,4,3,2,0$
}}\\
\midrule

\mfld{t12681} &
\multicolumn{4}{l}{\parbox{0.7\textwidth}{
1, 2, 3, 4, 4, 3, 2, 3, 2, 4, 2, 1, 1, 1, 2, 1, 3, 2, 1, 3, 2, 3, 4, 3, 2, 4, 1, 3, 2, 4, 3, 4, 4, 3, 2, 4, 1, 3, 2, 4, 3, 4, 4, 3, 2, 4, 3, 4
}}\\
\mfld{K8\_296} & 22 & 48  & 5 &5 \\
&\multicolumn{4}{l}{\parbox{0.7\textwidth}{
$22,21,17,16,13,12,10,9,7,6,5,4,2,0$
}}\\
\midrule

\mfld{o9\_38928} &
\multicolumn{4}{l}{\parbox{0.7\textwidth}{
1, 2, 1, 2, 3, 2, 4, 2, 3, 4, 4, 5, 4, 3, 5, 2, 3, 3, 2, 1, 3, 2, 3, 3, 2, 4, 2, 3, 2, 4, 3, 5, 4, 3, 5, 3, 2, 4, 1, 2, 2, 3, 3
}}\\
\mfld{K9\_620}& 19 & 43  &  6 & 6 \\
&\multicolumn{4}{l}{\parbox{0.7\textwidth}{
$19, 18, 13, 12, 10, 9, 7, 6, 4, 3, 2, 1, 0$
}}\\
\midrule

\mfld{o9\_39162} &
\multicolumn{4}{l}{\parbox{0.7\textwidth}{
1, 1, 2, 1, 3, 2, 4, 2, 5, 1, 3, 2, 2, 3, 2, 4, 2, 5, 2, 4, 3, 2, 4, 2, 5, 4, 3, 5, 2, 4, 2, 5, 2, 4, 3, 2, 2, 3, 3, 2, 2, 1, 2, 3, 4, 3, 2, 4, 5, 4, 3, 4, 3
}}\\
\mfld{K9\_624} & 24 & 53  & 6 & 6 \\
&\multicolumn{4}{l}{\parbox{0.7\textwidth}{
$24, 23, 18, 17, 15, 14, 11, 10, 9, 8, 6, 5, 3, 1, 0$
}}\\
\midrule

\mfld{o9\_40363} &
\multicolumn{4}{l}{\parbox{0.7\textwidth}{
1, 2, 1, 3, 4, 5, 4, 4, 4, 5, 4, 6, 3, 6, 2, 5, 4, 3, 5, 4, 6, 5, 6, 6, 5, 4, 6, 3, 5, 2, 4, 1, 3, 5, 2, 4, 6, 3, 5, 4, 6, 5, 6, 6, 5, 4, 6, 3, 5, 2, 4, 6, 1, 3, 5, 2, 4, 6, 3, 5, 4, 6, 5, 4, 6, 3, 5, 2, 4, 3, 5, 4
}}\\
\mfld{K9\_674} & 33 & 72 & 7 & 7\\
&\multicolumn{4}{l}{\parbox{0.7\textwidth}{
$33, 32, 26, 25, 21, 20, 18, 17, 14, 13, 12, 11, 9, 8, 6, 4, 2, 1, 0$
}}\\
\midrule

\mfld{o9\_40487} &
\multicolumn{4}{l}{\parbox{0.7\textwidth}{
1, 2, 1, 3, 3, 2, 2, 3, 4, 3, 2, 1, 3, 2, 1, 3, 2, 4, 2, 4, 1, 4, 2, 1, 3, 2, 3, 4, 3, 4, 3, 2
}}\\
\mfld{K9\_679} & 14 & 32 & 5 & 5 \\
&\multicolumn{4}{l}{\parbox{0.7\textwidth}{
14,13,9,8,6,5,3,2,1,0}}\\
\midrule

\mfld{o9\_40504} &
\multicolumn{4}{l}{\parbox{0.7\textwidth}{
1, 1, 2, 1, 3, 4, 3, 4, 3, 5, 4, 3, 5, 2, 4, 1, 3, 1, 2, 1, 3, 4, 5, 4, 3, 5, 4, 3, 5, 5, 5, 4, 3, 2, 1, 3, 4, 5, 4, 4, 5, 4, 3, 2, 4, 3, 4
}}\\
\mfld{K9\_680} & 21 & 47 & 6 & 6\\
&\multicolumn{4}{l}{\parbox{0.7\textwidth}{
$21, 20, 15, 14, 12, 11, 9, 8, 6, 5, 4, 3, 2, 1, 0$
}}\\
\midrule

\mfld{o9\_40582} &
\multicolumn{4}{l}{\parbox{0.7\textwidth}{
1, 2, 2, 3, 2, 2, 3, 4, 3, 2, 1, 2, 3, 2, 4, 4, 3, 3, 2, 1, 3, 3, 3, 2, 2, 3, 4, 3, 2, 2, 1, 2, 2, 3, 2, 3
}}\\
\mfld{K9\_685} & 16 & 36 & 5 & 5 \\
&\multicolumn{4}{l}{\parbox{0.7\textwidth}{
$16, 15, 11, 10, 7, 6, 5, 4, 2, 0$
}}\\
\midrule

\mfld{o9\_42675} &
\multicolumn{4}{l}{\parbox{0.7\textwidth}{
1, 2, 1, 3, 2, 4, 2, 3, 2, 4, 2, 3, 2, 3, 2, 1, 2, 3, 3, 4, 4, 3, 3, 4, 3, 3, 2, 1, 3, 2, 4, 2, 3, 2, 1, 3
}}\\
\mfld{K9\_723} & 16 & 36 & 5 & 5 \\
&\multicolumn{4}{l}{\parbox{0.7\textwidth}{
$16, 15, 11, 10, 8, 7, 5, 4, 3, 2, 1, 0$
}}\\
\bottomrule
\end{tabular}
\end{table}

\begin{table}
	\caption{Each manifold of $\calT_2$ is listed with a positive braid word whose closure is a knot with the manifold as its complement, the length and index of the braid, the genus of the knot, and the Alexander polynomial of the knot.}
	\label{tab:braidwordsT2}
	\begin{tabular}{@{}lllll@{}}
		\toprule
		manifold  &\multicolumn{4}{l}{braid word}\\
		knot census& genus & word length  & braid index & MFW bound \\
		& \multicolumn{4}{l}{Alexander polynomial}\\
		\midrule
		
		\mfld{t09284} 
		& \multicolumn{4}{l}{\parbox{0.7\textwidth}{
				1, 2, 1, 3, 1, 2, 1, 1, 1, 2, 1, 3, 2, 3, 2, 1, 2, 1, 2, 1, 2, 2, 3
			}}\\
		\mfld{K8\_186} & 10 & 23  & 4  & 4 \\
		& \multicolumn{4}{l}{\parbox{0.7\textwidth}{
				$10, 9, 6, 5, 4, 3, 1, 0$
		}}\\
		\midrule

		\mfld{t09450} 
		& \multicolumn{4}{l}{\parbox{0.7\textwidth}{
				1, 2, 3, 4, 5, 5, 4, 3, 5, 4, 3, 5, 2, 1, 3, 2, 4, 3, 5, 2, 3, 2, 4, 3, 5, 3, 2, 3, 3, 3, 3, 4, 5, 4, 3, 4, 4, 3, 5, 4, 3, 3, 2, 3, 4, 3, 5, 3, 5, 4, 3, 2, 3, 2, 4, 3, 4, 4, 5
		}}\\
		\mfld{K8\_189} & 27 & 59  & 6  & 6 \\
		& \multicolumn{4}{l}{\parbox{0.7\textwidth}{
		        $27, 26, 21, 20, 17, 16, 13, 12, 11, 10, 7, 5, 3, 2, 1, 0$
		}}\\
		\midrule

		\mfld{t09633} 
		& \multicolumn{4}{l}{\parbox{0.7\textwidth}{
				1, 2, 1, 2, 3, 2, 4, 1, 3, 2, 2, 1, 2, 3, 4, 3, 3, 2, 3, 4, 3, 2, 2, 1, 3, 3, 2, 3, 3, 2, 4, 3, 3, 2, 3, 4, 3, 2, 1, 2
		}}\\
		\mfld{K8\_195} & 18 & 40  & 5  & 5 \\
		& \multicolumn{4}{l}{\parbox{0.7\textwidth}{
		   $18,17,13,12,10,9,6,4,2,1,0$
		}}\\
		\midrule

		\mfld{t10496} 
		& \multicolumn{4}{l}{\parbox{0.7\textwidth}{
				1, 2, 3, 3, 3, 3, 2, 2, 1, 3, 3, 2, 3, 3, 2, 1, 3, 2, 3, 3, 2, 1, 3, 2, 3, 2, 3
		}}\\
		\mfld{K8\_220} & 12 &  27  & 4  & 4 \\
		& \multicolumn{4}{l}{\parbox{0.7\textwidth}{
		$12, 11, 8, 7, 5, 4, 2, 1, 0$
		}}\\
		\midrule
		
		\mfld{o9\_28751} 
		& \multicolumn{4}{l}{\parbox{0.7\textwidth}{
				1, 2, 1, 3, 4, 5, 4, 6, 3, 5, 7, 4, 6, 5, 7, 4, 6, 5, 6, 7, 6, 5, 4, 6, 5, 4, 6, 3, 2, 4, 1, 3, 2, 1, 3, 1, 4, 5, 4, 6, 3, 4, 5, 4, 3, 5, 4, 6, 3, 5, 2, 6, 1, 7, 6, 5, 5, 4, 3, 5, 2, 4, 1, 3, 5, 2, 4, 6, 5, 7, 4, 5, 6, 5, 5, 4, 3, 2, 1, 3, 2, 4, 3, 5, 2, 4, 1, 3, 5, 2, 4, 6, 1, 3, 5, 7, 4, 6, 3, 5, 2, 4, 3, 2, 1, 3, 2, 4, 3
		}}\\
		\mfld{K9\_412} & 51 & 109  & 8  & 8 \\
		& \multicolumn{4}{l}{\parbox{0.7\textwidth}{
		51, 50, 43, 42, 37, 36, 33, 32, 29, 28, 25, 24, 23, 22, 19, 18, 16, 14, 11, 10, 9, 8, 7, 6, 5, 4, 2, 0
		}}\\
		\midrule
			
		\mfld{o9\_29751} 
		& \multicolumn{4}{l}{\parbox{0.7\textwidth}{
				1, 2, 3, 4, 5, 4, 5, 4, 6, 3, 5, 2, 4, 6, 1, 3, 5, 2, 4, 6, 3, 5, 4, 6, 3, 5, 2, 4, 6, 1, 3, 5, 2, 4, 6, 3, 5, 4, 6, 6, 5, 6, 6, 6, 5, 4, 6, 3, 5, 2, 4, 6, 3, 5, 4, 6, 5, 6, 5, 6
		}}\\
		\mfld{K9\_429} &27 & 60  & 7  & 7 \\
		& \multicolumn{4}{l}{\parbox{0.7\textwidth}{
		$27, 26, 20, 19, 17, 16, 13, 12, 10, 9, 7, 6, 5, 4, 3, 2, 0$
		}}\\

		\bottomrule
	\end{tabular}
\end{table}

\begin{table}	
	\caption{Table~\ref{tab:braidwordsT2} continued}
	\label{tab:braidwordsT2-2}
\begin{tabular}{@{}lllll@{}}
	\toprule
	manifold  &\multicolumn{4}{l}{braid word}\\
		knot census& genus & word length  & braid index & MFW bound \\
	& \multicolumn{4}{l}{Alexander polynomial}\\
	\midrule

		\mfld{o9\_32314} 
		& \multicolumn{4}{l}{\parbox{0.7\textwidth}{
				1, 2, 1, 3, 2, 4, 1, 3, 5, 2, 4, 6, 1, 3, 5, 2, 4, 1, 3, 2, 1, 1, 2, 3, 4, 5, 6, 6, 5, 4, 3, 2, 1, 1, 1, 2, 1, 3, 2, 4, 1, 3, 5, 2, 4, 6, 1, 3, 5, 2, 4, 1, 3, 5, 2, 4, 6, 1, 3, 6, 2, 6, 1, 5, 4, 3, 2, 1
		}}\\
		\mfld{K9\_481} & 31 & 68  & 7  & 7 \\
		& \multicolumn{4}{l}{\parbox{0.7\textwidth}{
				$31, 30, 24, 23, 21, 20, 16, 15, 14, 13, 11, 10, 8, 7, 6, 5, 4, 3, 1, 0$
		}}\\
		\midrule
		
		\mfld{o9\_33380} 
		& \multicolumn{4}{l}{\parbox{0.7\textwidth}{
				1, 2, 1, 2, 1, 3, 1, 4, 2, 4, 1, 3, 2, 4, 1, 3, 2, 4, 3, 4, 4, 3, 2, 4, 1, 3, 2, 4, 3, 4, 4, 3, 2, 4, 1, 3, 2, 4, 3, 4, 4, 3, 2, 4
		}}\\
		\mfld{K9\_497} &20 & 44  & 5  & 5 \\
		& \multicolumn{4}{l}{\parbox{0.7\textwidth}{
		        $20, 19, 15, 14, 11, 10, 9, 8, 6, 5, 4, 3, 1, 0$
		}}\\
		\midrule
		
		\mfld{o9\_33944} 
		& \multicolumn{4}{l}{\parbox{0.7\textwidth}{
				1, 2, 1, 3, 2, 4, 1, 3, 2, 4, 3, 2, 4, 1, 3, 2, 4, 3, 4, 4, 3, 2, 4, 1, 3, 2, 4, 3, 4, 4, 3, 2, 4, 1, 3, 2, 4, 3, 4, 4, 3, 2, 4, 1, 3, 2, 4, 3, 4, 4, 3, 2, 3, 4, 4, 3
		}}\\
		\mfld{K9\_511} & 26 & 56  & 5  & 5 \\
		& \multicolumn{4}{l}{\parbox{0.7\textwidth}{
				$26, 25, 21, 20, 16, 15, 12, 11, 10, 9, 7, 6, 5, 4, 2, 1, 0$
		}}\\
		\midrule
		
		\mfld{o9\_33959} 
		& \multicolumn{4}{l}{\parbox{0.7\textwidth}{
				1, 2, 3, 2, 4, 3, 5, 3, 5, 4, 3, 5, 2, 5, 2, 4, 4, 3, 2, 4, 1, 3, 5, 2, 5, 3, 6, 4, 3, 5, 2, 4, 3, 2, 2, 1, 2, 2, 3, 3, 4, 3, 5, 2, 6, 3, 4, 5, 4, 3, 2, 2, 1, 3, 2, 4, 1, 5, 4, 3, 2, 4, 3, 3, 2, 1, 3, 2, 3, 4, 4, 3, 4, 5, 4, 3, 2, 1, 3, 3, 4, 3, 5, 4
		}}\\
		\mfld{K9\_513} & 39 & 84 & 7 & 7 \\
		& \multicolumn{4}{l}{\parbox{0.7\textwidth}{
				$39, 38, 32, 31, 27, 26, 23, 22, 20, 19, 16, 15, 14, 13, 11, 10, 8, 6, 4, 3, 2, 1, 0$
		}}\\
		\midrule
		
		\mfld{o9\_34409} 
		& \multicolumn{4}{l}{\parbox{0.7\textwidth}{
				1, 2, 1, 1, 2, 1, 1, 2, 1, 1, 2, 1, 1, 2, 1, 1, 2, 1, 1, 2, 1, 1, 2, 1, 1, 2, 3, 2, 1, 1, 2, 2, 3
		}}\\
		\mfld{K9\_524} & 15 & 33  & 4  & 4 \\
		& \multicolumn{4}{l}{\parbox{0.7\textwidth}{
				$15, 14, 11, 10, 8, 7, 5, 4, 2, 1, 0$
		}}\\
		\midrule
		
		\mfld{o9\_36380} 
		& \multicolumn{4}{l}{\parbox{0.7\textwidth}{
				1, 2, 1, 3, 2, 4, 1, 3, 5, 2, 4, 6, 1, 3, 5, 7, 2, 4, 6, 8, 1, 3, 5, 7, 9, 2, 4, 6, 8, 1, 3, 5, 7, 2, 4, 6, 1, 3, 5, 2, 4, 1, 3, 2, 1, 1, 2, 1, 3, 2, 4, 1, 3, 5, 2, 4, 6, 1, 3, 5, 7, 2, 4, 6, 8, 1, 3, 5, 7, 9, 2, 4, 6, 8, 1, 3, 5, 7, 2, 4, 6, 1, 3, 5, 2, 4, 1, 3, 4, 5, 6, 7, 8, 9, 9, 8, 7, 9, 6, 8, 5, 7, 9, 4, 6, 9, 3, 5, 8, 2, 4, 7, 1, 3, 6, 8, 2, 5, 7, 9, 4, 8, 3, 7, 6, 5, 4
		}}\\
		\mfld{K9\_565} & 59 & 127  & 10  & 10 \\
		& \multicolumn{4}{l}{\parbox{0.7\textwidth}{
				59, 58, 49, 48, 45, 44, 39, 38, 35, 34, 31, 30, 28, 27, 25, 24, 21, 20, 19, 18, 17, 16, 14, 13, 11, 10, 9, 8, 7, 6, 5, 4, 3, 2, 0
		}}\\
		\midrule
		
		\mfld{o9\_40026} 
		& \multicolumn{4}{l}{\parbox{0.7\textwidth}{
				1, 2, 3, 4, 4, 3, 2, 4, 1, 4, 3, 2, 3, 4, 3, 4, 3, 4, 4, 3, 2, 3, 2, 1, 2, 2, 3, 2, 2, 3, 3, 2, 3, 4, 3, 2, 4, 2, 4, 1, 2, 3
		}}\\
		\mfld{K9\_656} & 19 & 42  & 5  & 5 \\
		& \multicolumn{4}{l}{\parbox{0.7\textwidth}{
		19,18,14,13,11,10,7,6,5,4,3,2,0}}\\

\bottomrule
\end{tabular}
\end{table}

\section{Acknowledgements}
This article began as a record of the results of an ``office hour'' session on the use of SnapPy \cite{snappy} and KLO \cite{KLO} held at the ICERM workshop Perspectives on Dehn Surgery,  July 15--19, 2019.  We thank ICERM (the Institute for Computational and Experimental Research in Mathematics in Providence, RI) for the productive environment where this work could be carried out and Nathan Dunfield for his interest and assistance. Furthermore we thank all of SnapPy's creators/contributors and KLO's Frank Swenton for producing and maintaining these ever-useful programs.   We also thank the anonymous referee for useful suggestions and the recommendation to expand the scope of this work.

As a part of the ICERM workshop, this work is supported in part by the National Science Foundation under Grant No. DMS-1439786 and by the NSF CAREER Award DMS-1455132.
KLB was partially supported by a grant from the Simons Foundation (grant \#523883  to Kenneth L.\ Baker).
KM was partially supported by NSF RTG grant DMS-1344991 and by the Simons Foundation.
SO was partially supported by Turkish Academy of Sciences T\"{U}BA-GEB\.{I}P Award.
ST’s attendance at the workshop was supported by the Dartmouth Department of Mathematics. 
AW's attendance at the workshop was supported by the grant NSF Career Award DMS-1455132.

\begin{table}[]
    \caption{The members of $\calD$ in the SnapPy census of manifolds assembled from at most 8 ideal tetrahedra.}
    \label{tab:censusLspaceknotsI}
{\small 
\mfld{m016},
\mfld{m071},
\mfld{m082},
\mfld{m103},
\mfld{m118},
\mfld{m144},
\mfld{m194},
\mfld{m198},
\mfld{m211},
\mfld{m223},
\mfld{m239},
\mfld{m240},
\mfld{m270},
\mfld{m276},
\mfld{m281},
\mfld{s042},
\mfld{s068},
\mfld{s086},
\mfld{s104},
\mfld{s114},
\mfld{s294},
\mfld{s301},
\mfld{s308},
\mfld{s336},
\mfld{s344},
\mfld{s346},
\mfld{s367},
\mfld{s369},
\mfld{s384},
\mfld{s407},
\mfld{s560},
\mfld{s582},
\mfld{s652},
\mfld{s665},
\mfld{s682},
\mfld{s684},
\mfld{s769},
\mfld{s800},
\mfld{s849},
\mfld{v0082}, 
\mfld{v0114}, 
\mfld{v0165}, 
\mfld{v0220}, 
\mfld{v0223},
\mfld{v0249},
\mfld{v0319},
\mfld{v0330},
\mfld{v0398},
\mfld{v0407},
\mfld{v0424},
\mfld{v0434},
\mfld{v0497},
\mfld{v0545},
\mfld{v0554},
\mfld{v0570},
\mfld{v0573},
\mfld{v0707},
\mfld{v0709},
\mfld{v0715},
\mfld{v0740},
\mfld{v0741},
\mfld{v0759},
\mfld{v0765},
\mfld{v0830},
\mfld{v0847},
\mfld{v0912},
\mfld{v0939},
\mfld{v0945},
\mfld{v0959},
\mfld{v1077},
\mfld{v1109},
\mfld{v1269},
\mfld{v1300},
\mfld{v1359},
\mfld{v1392},
\mfld{v1423},
\mfld{v1425},
\mfld{v1547},
\mfld{v1565},
\mfld{v1620},
\mfld{v1628},
\mfld{v1690},
\mfld{v1709},
\mfld{v1716},
\mfld{v1718},
\mfld{v1728},
\mfld{v1810},
\mfld{v1832},
\mfld{v1839},
\mfld{v1915},
\mfld{v1921},
\mfld{v1940},
\mfld{v1966},
\mfld{v1980},
\mfld{v1986},
\mfld{v2024},
\mfld{v2090},
\mfld{v2215},
\mfld{v2217},
\mfld{v2290},
\mfld{v2325},
\mfld{v2384},
\mfld{v2759},
\mfld{v2871},
\mfld{v2900},
\mfld{v2925},
\mfld{v2930},
\mfld{v3070},
\mfld{v3105},
\mfld{v3234},
\mfld{v3335},
\mfld{v3354},  
\mfld{v3482},  
\mfld{t00110},
\mfld{t00146},
\mfld{t00324},
\mfld{t00423},
\mfld{t00434},
\mfld{t00550},
\mfld{t00621},
\mfld{t00729},
\mfld{t00787},
\mfld{t00826},
\mfld{t00855},
\mfld{t00873},
\mfld{t00932},
\mfld{t01033},
\mfld{t01037},
\mfld{t01125},
\mfld{t01216},
\mfld{t01268},
\mfld{t01292},
\mfld{t01318},
\mfld{t01368},
\mfld{t01409},
\mfld{t01422},
\mfld{t01424},
\mfld{t01440},
\mfld{t01598},
\mfld{t01636},
\mfld{t01646},
\mfld{t01690},
\mfld{t01757},
\mfld{t01815},
\mfld{t01834},
\mfld{t01850},
\mfld{t01863},
\mfld{t01949},
\mfld{t01966},
\mfld{t02099},
\mfld{t02104},
\mfld{t02238},
\mfld{t02276},
\mfld{t02378},
\mfld{t02398},
\mfld{t02404},
\mfld{t02470},
\mfld{t02537},
\mfld{t02567},
\mfld{t02639},
\mfld{t03106},
\mfld{t03566},
\mfld{t03607},
\mfld{t03709},
\mfld{t03710},
\mfld{t03713},
\mfld{t03781},
\mfld{t03843},
\mfld{t03864},
\mfld{t03956},
\mfld{t03979},
\mfld{t04003},
\mfld{t04019},
\mfld{t04102},
\mfld{t04180},
\mfld{t04228},
\mfld{t04244},
\mfld{t04382},
\mfld{t04449},
\mfld{t04557},
\mfld{t04721},
\mfld{t04756},
\mfld{t04927},
\mfld{t05118},
\mfld{t05239},
\mfld{t05390},
\mfld{t05425},
\mfld{t05426},
\mfld{t05538},
\mfld{t05564},
\mfld{t05578},
\mfld{t05658},
\mfld{t05663},
\mfld{t05674},
\mfld{t05695},
\mfld{t06001},
\mfld{t06246},
\mfld{t06440},
\mfld{t06463},
\mfld{t06525},
\mfld{t06570},
\mfld{t06573},
\mfld{t06605},
\mfld{t06637},
\mfld{t06715},
\mfld{t06957},
\mfld{t07070},
\mfld{t07104},
\mfld{t07348},
\mfld{t07355},
\mfld{t07412},
\mfld{t07670},
\mfld{t08111},
\mfld{t08114},
\mfld{t08184},
\mfld{t08201},
\mfld{t08267},
\mfld{t08273},
\mfld{t08403},
\mfld{t08532},
\mfld{t08576},
\mfld{t08936},
\mfld{t09016},
\mfld{t09126},
\mfld{t09267},
\mfld{t09284},
\mfld{t09313},
\mfld{t09450},
\mfld{t09455},
\mfld{t09500},
\mfld{t09580},
\mfld{t09633},
\mfld{t09690},
\mfld{t09704},
\mfld{t09847},
\mfld{t09852},
\mfld{t09882},
\mfld{t09912},
\mfld{t09954},
\mfld{t10177},
\mfld{t10188},
\mfld{t10215},
\mfld{t10224},
\mfld{t10230},
\mfld{t10262},
\mfld{t10292},
\mfld{t10462},
\mfld{t10496},
\mfld{t10643},
\mfld{t10681},
\mfld{t10832},
\mfld{t10985},
\mfld{t11198},
\mfld{t11376},
\mfld{t11548},
\mfld{t11556},
\mfld{t11852},
\mfld{t11887},
\mfld{t11909},
\mfld{t12288},
\mfld{t12533},
\mfld{t12681},
\mfld{t12753}
}
\end{table}
\begin{table}[]
    \caption{The members of $\calD$ in the SnapPy census of manifolds assembled from 9 ideal tetrahedra.}
    \label{tab:censusLspaceknotsII}
{\small 
\mfld{o9\_00133},
\mfld{o9\_00168},
\mfld{o9\_00644},
\mfld{o9\_00797},
\mfld{o9\_00815},
\mfld{o9\_01079},
\mfld{o9\_01175},
\mfld{o9\_01436},
\mfld{o9\_01496},
\mfld{o9\_01584},
\mfld{o9\_01621},
\mfld{o9\_01680},
\mfld{o9\_01765},
\mfld{o9\_01936},
\mfld{o9\_01953},
\mfld{o9\_01955},
\mfld{o9\_02255},
\mfld{o9\_02340},
\mfld{o9\_02350},
\mfld{o9\_02383},
\mfld{o9\_02386},
\mfld{o9\_02655},
\mfld{o9\_02696},
\mfld{o9\_02706},
\mfld{o9\_02735},
\mfld{o9\_02772},
\mfld{o9\_02786},
\mfld{o9\_02794},
\mfld{o9\_02909},
\mfld{o9\_03032},
\mfld{o9\_03108},
\mfld{o9\_03118},
\mfld{o9\_03133},
\mfld{o9\_03149},
\mfld{o9\_03162},
\mfld{o9\_03188},
\mfld{o9\_03288},
\mfld{o9\_03313},
\mfld{o9\_03412},
\mfld{o9\_03526},
\mfld{o9\_03586},
\mfld{o9\_03622},
\mfld{o9\_03802},
\mfld{o9\_03833},
\mfld{o9\_03932},
\mfld{o9\_04054},
\mfld{o9\_04060},
\mfld{o9\_04106},
\mfld{o9\_04205},
\mfld{o9\_04245},
\mfld{o9\_04269},
\mfld{o9\_04313},
\mfld{o9\_04431},
\mfld{o9\_04435},
\mfld{o9\_04438},
\mfld{o9\_04938},
\mfld{o9\_05021},
\mfld{o9\_05177},
\mfld{o9\_05229},
\mfld{o9\_05287},
\mfld{o9\_05357},
\mfld{o9\_05426},
\mfld{o9\_05483},
\mfld{o9\_05562},
\mfld{o9\_05618},
\mfld{o9\_05860},
\mfld{o9\_05970},
\mfld{o9\_06060},
\mfld{o9\_06128},
\mfld{o9\_06154},
\mfld{o9\_06248},
\mfld{o9\_06301},
\mfld{o9\_06956},
\mfld{o9\_07044},
\mfld{o9\_07152},
\mfld{o9\_07167},
\mfld{o9\_07195},
\mfld{o9\_07401},
\mfld{o9\_07790},
\mfld{o9\_07893},
\mfld{o9\_07943},
\mfld{o9\_07945},
\mfld{o9\_08006},
\mfld{o9\_08042},
\mfld{o9\_08224},
\mfld{o9\_08302},
\mfld{o9\_08402},
\mfld{o9\_08477},
\mfld{o9\_08497},
\mfld{o9\_08647},
\mfld{o9\_08765},
\mfld{o9\_08771},
\mfld{o9\_08776},
\mfld{o9\_08828},
\mfld{o9\_08831},
\mfld{o9\_08852},
\mfld{o9\_08875},
\mfld{o9\_09052},
\mfld{o9\_09213},
\mfld{o9\_09271},
\mfld{o9\_09372},
\mfld{o9\_09465},
\mfld{o9\_09731},
\mfld{o9\_09808},
\mfld{o9\_10020},
\mfld{o9\_10192},
\mfld{o9\_10213},
\mfld{o9\_10696},
\mfld{o9\_11002},
\mfld{o9\_11100},
\mfld{o9\_11248},
\mfld{o9\_11467},
\mfld{o9\_11537},
\mfld{o9\_11541},
\mfld{o9\_11556},
\mfld{o9\_11560},
\mfld{o9\_11570},
\mfld{o9\_11658},
\mfld{o9\_11685},
\mfld{o9\_11795},
\mfld{o9\_11845},
\mfld{o9\_11999},
\mfld{o9\_12079},
\mfld{o9\_12144},
\mfld{o9\_12230},
\mfld{o9\_12253},
\mfld{o9\_12412},
\mfld{o9\_12459},
\mfld{o9\_12477},
\mfld{o9\_12519},
\mfld{o9\_12693},
\mfld{o9\_12736},
\mfld{o9\_12757},
\mfld{o9\_12873},
\mfld{o9\_12892},
\mfld{o9\_12919},
\mfld{o9\_12971},
\mfld{o9\_13052},
\mfld{o9\_13054},
\mfld{o9\_13056},
\mfld{o9\_13125},
\mfld{o9\_13182},
\mfld{o9\_13188},
\mfld{o9\_13400},
\mfld{o9\_13403},
\mfld{o9\_13433},
\mfld{o9\_13508},
\mfld{o9\_13537},
\mfld{o9\_13604},
\mfld{o9\_13639},
\mfld{o9\_13649},
\mfld{o9\_13666},
\mfld{o9\_13720},
\mfld{o9\_13952},
\mfld{o9\_14018},
\mfld{o9\_14079},
\mfld{o9\_14108},
\mfld{o9\_14136},
\mfld{o9\_14359},
\mfld{o9\_14364},
\mfld{o9\_14376},
\mfld{o9\_14495},
\mfld{o9\_14599},
\mfld{o9\_14716},
\mfld{o9\_14831},
\mfld{o9\_14974},
\mfld{o9\_15506},
\mfld{o9\_15633},
\mfld{o9\_15808},
\mfld{o9\_15997},
\mfld{o9\_16065},
\mfld{o9\_16141},
\mfld{o9\_16157},
\mfld{o9\_16181},
\mfld{o9\_16319},
\mfld{o9\_16356},
\mfld{o9\_16431},
\mfld{o9\_16514},
\mfld{o9\_16527},
\mfld{o9\_16642},
\mfld{o9\_16685},
\mfld{o9\_16748},
\mfld{o9\_16920},
\mfld{o9\_17382},
\mfld{o9\_17450},
\mfld{o9\_17646},
\mfld{o9\_18007},
\mfld{o9\_18209},
\mfld{o9\_18341},
\mfld{o9\_18633},
\mfld{o9\_18646},
\mfld{o9\_18813},
\mfld{o9\_19130},
\mfld{o9\_19247},
\mfld{o9\_19364},
\mfld{o9\_19396},
\mfld{o9\_19645},
\mfld{o9\_19724},
\mfld{o9\_20029},
\mfld{o9\_20219},
\mfld{o9\_20305},
\mfld{o9\_20364},
\mfld{o9\_20472},
\mfld{o9\_21195},
\mfld{o9\_21496},
\mfld{o9\_21513},
\mfld{o9\_21620},
\mfld{o9\_21893},
\mfld{o9\_21918},
\mfld{o9\_22129},
\mfld{o9\_22252},
\mfld{o9\_22477},
\mfld{o9\_22607},
\mfld{o9\_22663},
\mfld{o9\_22698},
\mfld{o9\_22925},
\mfld{o9\_23023},
\mfld{o9\_23032},
\mfld{o9\_23179},
\mfld{o9\_23263},
\mfld{o9\_23461},
\mfld{o9\_23660},
\mfld{o9\_23723},
\mfld{o9\_23955},
\mfld{o9\_23961},
\mfld{o9\_23971},
\mfld{o9\_23977},
\mfld{o9\_24069},
\mfld{o9\_24126},
\mfld{o9\_24149},
\mfld{o9\_24183},
\mfld{o9\_24290},
\mfld{o9\_24401},
\mfld{o9\_24407},
\mfld{o9\_24534},
\mfld{o9\_24592},
\mfld{o9\_24779},
\mfld{o9\_24886},
\mfld{o9\_24889},
\mfld{o9\_24946},
\mfld{o9\_25110},
\mfld{o9\_25199},
\mfld{o9\_25341},
\mfld{o9\_25444},
\mfld{o9\_25595},
\mfld{o9\_25709},
\mfld{o9\_25832},
\mfld{o9\_26141},
\mfld{o9\_26471},
\mfld{o9\_26570},
\mfld{o9\_26604},
\mfld{o9\_26767},
\mfld{o9\_26791},
\mfld{o9\_27107},
\mfld{o9\_27155},
\mfld{o9\_27261},
\mfld{o9\_27371},
\mfld{o9\_27392},
\mfld{o9\_27429},
\mfld{o9\_27480},
\mfld{o9\_27737},
\mfld{o9\_27767},
\mfld{o9\_28113},
\mfld{o9\_28153},
\mfld{o9\_28284},
\mfld{o9\_28529},
\mfld{o9\_28592},
\mfld{o9\_28746},
\mfld{o9\_28751},
\mfld{o9\_28810},
\mfld{o9\_29048},
\mfld{o9\_29246},
\mfld{o9\_29436},
\mfld{o9\_29529},
\mfld{o9\_29551},
\mfld{o9\_29648},
\mfld{o9\_29751},
\mfld{o9\_29766},
\mfld{o9\_30142},
\mfld{o9\_30150}, 
\mfld{o9\_30375},
\mfld{o9\_30634},
\mfld{o9\_30650},
\mfld{o9\_30721},
\mfld{o9\_30790},
\mfld{o9\_31165},
\mfld{o9\_31267},
\mfld{o9\_31321},
\mfld{o9\_31440}, 
\mfld{o9\_31481},
\mfld{o9\_32044},
\mfld{o9\_32065},
\mfld{o9\_32132},
\mfld{o9\_32150},
\mfld{o9\_32257},
\mfld{o9\_32314},
\mfld{o9\_32471},
\mfld{o9\_32588},
\mfld{o9\_32964},
\mfld{o9\_33189},
\mfld{o9\_33284},
\mfld{o9\_33380},
\mfld{o9\_33430},
\mfld{o9\_33486},
\mfld{o9\_33526},
\mfld{o9\_33585},
\mfld{o9\_33801},
\mfld{o9\_33944},
\mfld{o9\_33959},
\mfld{o9\_34000},
\mfld{o9\_34403},
\mfld{o9\_34409},
\mfld{o9\_34689},
\mfld{o9\_35320},
\mfld{o9\_35549},
\mfld{o9\_35666},
\mfld{o9\_35682},
\mfld{o9\_35720},
\mfld{o9\_35736},
\mfld{o9\_35772},
\mfld{o9\_35928},
\mfld{o9\_36114},
\mfld{o9\_36250},
\mfld{o9\_36380},
\mfld{o9\_36544},
\mfld{o9\_36809},
\mfld{o9\_36958},
\mfld{o9\_37050},
\mfld{o9\_37291},
\mfld{o9\_37482},
\mfld{o9\_37551},
\mfld{o9\_37685},
\mfld{o9\_37751},
\mfld{o9\_37754},
\mfld{o9\_37851},
\mfld{o9\_37941},
\mfld{o9\_38287},
\mfld{o9\_38679},
\mfld{o9\_38811},
\mfld{o9\_38928},
\mfld{o9\_38989},
\mfld{o9\_39162},
\mfld{o9\_39394},
\mfld{o9\_39451},
\mfld{o9\_39521},
\mfld{o9\_39606},
\mfld{o9\_39608},
\mfld{o9\_39859},
\mfld{o9\_39879},
\mfld{o9\_39981},
\mfld{o9\_40026},
\mfld{o9\_40052},
\mfld{o9\_40075},
\mfld{o9\_40179},
\mfld{o9\_40363},
\mfld{o9\_40487},
\mfld{o9\_40504},
\mfld{o9\_40582},
\mfld{o9\_41372},
\mfld{o9\_42224},
\mfld{o9\_42493},
\mfld{o9\_42675},
\mfld{o9\_42961},
\mfld{o9\_43001},
\mfld{o9\_43679},
\mfld{o9\_43750},
\mfld{o9\_43857},
\mfld{o9\_43953},
\mfld{o9\_44054}
}
\end{table}

\let\MRhref\undefined
\bibliographystyle{hamsalpha}

\bibliography{sources.bib}

\end{document}